\newcommand{\noopsort}[1]{}
\DeclareMathOperator{\Gal}{Gal}
\DeclareMathOperator{\Aut}{Aut}
\DeclareMathOperator{\Stab}{Stab}
\DeclareMathOperator{\ggcd}{gcd}
\newcommand{\ZZ}{\mathbb{Z}}
\newcommand{\FF}{\mathbb{F}}
\newcommand{\Fp}{\mathbb{F}_p}
\newcommand{\Zp}{\ZZ_p}
\newcommand{\QQ}{\mathbb{Q}}
\newcommand{\Qp}{\QQ_p}
\newcommand{\bQp}{\bar{\QQ}_p}
\newcommand{\GQp}{\mathcal{G}}
\newcommand{\hatZ}{\hat{\ZZ}}
\newcommand{\sss}{\mathrm{ss}}
\newcommand{\xc}{\mathrm{xc}}
\newtheorem{theorem}{Theorem}[section]
\newtheorem{proposition}[theorem]{Proposition}
\newtheorem{corollary}[theorem]{Corollary}
\theoremstyle{definition}
\newtheorem{remark}[theorem]{Remark}
\newtheorem{definition}[theorem]{Definition}
\newtheorem{conjecture}[theorem]{Conjecture}
\title{The Inverse Galois Problem for $p$-adic fields}
\author{David Roe}
\thanks{Supported by Simons Foundation grant 550033}
\address{Department of Mathematics, MIT 2-106, 77 Massachusetts Ave, Cambridge, MA 02139}
\email{roed@mit.edu}
\keywords{$p$-adic extensions, inverse Galois theory, profinite groups}
\subjclass[2010]{12F12 (primary) 12Y05, 20C40, 11S15, 11Y40}
\begin{document}
\begin{abstract}
We describe a method for counting the number of extensions of $\Qp$ with a given Galois group $G$, founded upon the description of the absolute Galois group of $\Qp$ due to Jannsen and Wingberg.
Because this description is only known for odd $p$, our results do not apply to $\QQ_2$.  We report on the results of counting such extensions for $G$ of order up to $2000$ (except those divisible by $512$),
for $p=3,5,7,11,13$.  In particular, we highlight a relatively short list of minimal $G$ that do not arise as Galois groups.  Motivated by this list, we prove two theorems about the inverse Galois problem for $\Qp$:
one giving a necessary condition for $G$ to be realizable over $\Qp$ and the other giving a sufficient condition.
\end{abstract}
\maketitle

\section{Introduction}

The inverse Galois problem is most commonly studied over $\QQ$.  There, a theorem of Shafarevich \citelist{\cite{shafarevich:54a}\cite{neukirch-schmidt-wingberg:CohomologyOfNumberFields}*{Thm. 9.6.1}}
shows that every solvable group is realizable as the Galois group of an extension of $\QQ$.  Attention has thus focused on simple groups, and many have been shown to be realizable; see \cite{malle-matzat:InverseGaloisTheory} for background.

Over $\QQ$, if a given group arises as a Galois group it will arise for infinitely many extensions.  Thus the constructive version of the problem, finding extensions with a given Galois group,
has been approached by the method of generic polynomials.  A generic polynomial for a group $G$ is a monic polynomial with coefficients in a function field $\QQ(c_1, \dots, c_n)$
so that every extension of $\QQ$ with Galois group $G$ will arise via specializing the $c_i$ to elements of $\QQ$.  Even if $G$ is realizable, it may not have a generic polynomial parameterizing all extensions.

Over $\Qp$, for fixed $p$ and $G$, there are only finitely many isomorphism classes of Galois extensions $K/\Qp$ with $\Gal(K/\Qp) \cong G$.  Thus, rather than trying to produce them via a
generic polynomial, one could hope to enumerate them directly.  As a first step toward such an enumeration, in this paper we study the less refined question of counting such $K$.

The counting and enumeration of $p$-adic fields has a rich history, mostly separate from the study of the inverse Galois problem.  Rather than focusing on the Galois group, most approaches have
studied the extensions of a given degree, or with a given degree and discriminant.  Foundational work of Krasner \cite{krasner:66a}*{Thm. 2} gave counts for the number of extensions of degree $n$
in a fixed algebraic closure, and Serre \cite{serre:78a} gives a ``mass formula'' where the counts are weighted appropriately.  More recently, Hou and Keating \cite{hou-keating:04a} and
Monge \cite{monge:11a} have described how to count isomorphism classes of extensions with prescribed ramification and inertia degrees.

There has been some work on counting extensions with a given Galois group.  When $G$ is a $p$-group generated by $d$ elements (minimally) and $k/\Qp$ has degree $n$,
Shafarevich \cite{shafarevich:47a} has obtained the following formula for the number of extensions of $k$ with Galois group $G$, using his description of the maximal pro-$p$ quotient of the absolute Galois group:
\begin{equation} \label{eq:shaf}
\frac{1}{\lvert \Aut(G) \rvert} \left(\frac{\lvert G \rvert}{p^d}\right)^{n+1} \prod_{i=0}^{d-1} (p^{n+1} - p^i).
\end{equation}
The result only holds for $k$ that do not contain the $p$-th roots of unity, but Yamagishi \cite{yamagishi:95a} has generalized it, obtaining a formula involving characters of $G$.

Other authors have pursued the problem of enumerating $p$-adic fields \cites{pauli-roblot:01a,jones-roberts:06a} of a given degree.  Theoretically, this would solve the problem of
enumerating with a given Galois group, since one can determine from $G$ the smallest degree where a field can have a normal closure with Galois group $G$. 
However, for many groups this degree is prohibitively large for the methods employed, since you also get many other, much larger, Galois groups at the same time.

In this paper, we count Galois extensions with Galois group $G$ by exploiting the explicit description of the absolute Galois group of $\Qp$.  This approach has the benefit of completely avoiding computations with polynomials,
allowing for a large number of groups to be considered.  The downside is that we do not get any information on many invariants of number theoretic interest, such as the discriminant or the ramification filtration,
beyond distinguishing between tame and wild inertia.

We have chosen to focus on the case of $\Qp$ because it has the most intrinsic interest, and because the number of extensions grows exponentially with the absolute degree of the base field, as illustrated by \eqref{eq:shaf}.
The code, which uses GAP \cite{gap} and SageMath \cite{sage}, can be found at \url{https://github.com/roed314/padicIGP}.

\subsection{Summary}

We begin Section \ref{sec:pot} with the notion of a \emph{potentially $p$-realizable group}, which encapsulates the obvious conditions on $G$ that come from the first few steps of the ramification filtration.
This notion is closed under quotients, and we conjecture that any potentially $p$-realizable group can be expressed as a semidirect product of its $p$-core and its tame quotient.
This conjecture is supported by experimental evidence, and has consequences for the existence of subextensions complementary to the maximal tame subextension.
We close with Section \ref{ssec:enum_pot}, where we give algorithms to test whether a group is potentially $p$-realizable and to enumerate such groups.

In Section \ref{sec:absgal} we review the structure of the absolute Galois group, which plays a key role in our approach to counting extensions.  We use the description to show that our notion of potentially $p$-realizable
has the property that any such group will be realized over some $p$-adic field $k$.

Section \ref{sec:counting} describes the algorithms used to count extensions $K/\Qp$ with a given Galois group $G$.  We give an explicit enumeration in the case of abelian groups, since we need this as a base case
for inductive lifting methods later.  We then summarize the tame case, which follows from the well-known structure of the tame quotient of $\Gal(\bar{\QQ}_p/\Qp)$.  Finally we give a lifting method for counting
extensions for arbitrary $G$, and briefly discuss its runtime.

In Section \ref{sec:invprob} we apply the counting algorithms to the question of whether a potentially $p$-realizable group is actually realized over $\Qp$.  We start by listing minimal examples of groups that are unrealizable.
We then proceed, in Section \ref{ssec:realizable}, to prove Theorems \ref{thm:multiplicity} and \ref{thm:converse} giving one necessary and one sufficient condition for $p$-realizability.
Both conditions relate to the structure of the $p$-core of $G$ as a representation of the tame quotient.

\subsection{Notation and Terminology}

We work throughout with a prime $p \ne 2$ and a finite group $G$.  There are some naturally defined subgroups of $G$ that will play an important role throughout the paper.
The $p$-core $V$ of $G$ is the intersection of all of the $p$-Sylow subgroups of $G$:
\[
V = \bigcap_{P \text{ $p$-Sylow}} P.
\]
It is the maximal normal $p$-group inside $G$.
The quotient $T = G/V$ has the structure of a metacyclic group
(an extension of a cyclic group by a cyclic group), but not canonically.
 It acts on $V$ by conjugation.  We call $G$ \emph{tame} if $V$ is trivial and $G=T$.
 
We will also use the Frattini subgroup $W$ of $V$, defined as
\[
W = V^p V'
\]
where $V'$ is the commutator subgroup of $V$.  The quotient $V/W$ is the maximal quotient of $V$ that is an elementary abelian $p$-group.  The action of $T$ on $V$ descends to an action on $V/W$, yielding a representation of $T$ on an $\Fp$-vector space.

We will refer to groups by their ID in GAP's SmallGroups library \cite{smallgroups} using the notation $n$G$k$, where $n$ is the order of $G$ and $k$ enumerates groups of that order.

Write $\GQp$ for the absolute Galois group $\Gal(\bar{\QQ}_p/\Qp)$.

\section{Potentially $p$-realizable groups} \label{sec:pot}

\subsection{The structure of $p$-adic Galois groups}

The structure of $p$-adic field extensions \cite{hasse:NumberTheory}*{Chapter 16} imposes constraints on the possible Galois groups that can arise.
Any finite extension $K \supseteq \Qp$ can be decomposed into a tower $K \supseteq K_t \supseteq K_u \supseteq \Qp$, where $K_u/\Qp$ is unramified, $K_t/K_u$ is tame and totally ramified,
and $K/K_t$ is totally wildly ramified.   When $K/\Qp$ is Galois, this tower corresponds to the first parts of the ramification filtration on $G = \Gal(K/\Qp)$:
\begin{equation} \label{eq:Gfilt}
G = G_{-1} \supseteq G_0 \supseteq G_1.
\end{equation}
The fixed field of $G_0$ is the unramified subfield $K_u$ and the quotient $G/G_0$ must be cyclic.  The fixed field of $G_1$ is the tame subfield $K_t$ and the quotient $G_0/G_1$
must be cyclic of order relatively prime to $p$.  Finally, $G_1 \cong \Gal(K/K_t)$ is a $p$-group.  Moreover, $G_0$ and $G_1$ are normal subgroups of $G$.

By a theorem of Iwasawa \cite{iwasawa:55a}*{Thm. 2}, the Frobenius element of $G/G_0$ acts on $G_0/G_1$ by raising to the $p$th power.

\begin{definition}
A group $G$ is \emph{potentially $p$-realizable} if it has a filtration $G \supseteq G_0 \supseteq G_1$ so that
\begin{enumerate}
\item $G_0$ and $G_1$ are normal in $G$,
\item $G/G_0$ is cyclic, generated by some $\sigma \in G$,
\item $G_0/G_1$ is cyclic of order relatively prime to $p$, generated by some $\tau \in G_0$,
\item $\tau^\sigma = \tau^p$,
\item $G_1$ is a $p$-group.
\end{enumerate}
We will call such a filtration on $G$ a \emph{tame structure}.
A group $G$ is \emph{$p$-realizable} if there exists an extension $K/\Qp$ with $\Gal(K/\Qp) \cong G$.
\end{definition}

\begin{remark}
By the discussion above, any $p$-realizable group is potentially $p$-realizable, justifying the terminology.
We will also see in Proposition \ref{prop:potpreal} that if $G$ is potentially $p$-realizable then it arises
as a Galois group after some finite extension, conforming with the common usage of ``potentially.''
\end{remark}

\begin{remark}
Since every $p$-group is nilpotent, the condition that $G$ is potentially $p$-realizable implies that $G$ is solvable.  However, some groups $G$ may have multiple tame structures.
The simplest example is $G = C_2$ and $p$ odd, where we can take $G_0 = G$ or $G_0 = 1$.  An example with varying $G_1$ is $G = C_{p^2}$, where we can take $G_0 = G_1 = C_p$ or $G_0 = G_1 = 1$.
\end{remark}
\begin{proposition}
Any quotient of a potentially $p$-realizable group is potentially $p$-realizable.
\end{proposition}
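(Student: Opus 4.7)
The plan is to check that, given a tame structure $G \supseteq G_0 \supseteq G_1$ on $G$ and a normal subgroup $N \triangleleft G$, the filtration
\[
G/N \supseteq G_0 N / N \supseteq G_1 N / N
\]
is a tame structure on $G/N$. In other words, the natural idea is simply to push the existing filtration through the quotient map and verify that each of the five defining properties descends. I expect no real obstacle here: this is a routine compatibility check, and the only point that needs a moment's thought is the middle quotient.

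For (1), normality is immediate since images of normal subgroups under surjections are normal. For (2), the third isomorphism theorem gives $(G/N)/(G_0 N/N) \cong G/(G_0 N)$, which is a quotient of the cyclic group $G/G_0$; it is therefore cyclic, generated by the image of $\sigma$. For (5), $G_1 N/N \cong G_1/(G_1 \cap N)$ is a quotient of the $p$-group $G_1$, hence a $p$-group.

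The condition (3) is the step where I would be most careful. We have
\[
(G_0 N / N)/(G_1 N/N) \cong G_0 N / G_1 N,
\]
and the surjection $G_0 \twoheadrightarrow G_0 N / G_1 N$ sending $g_0 \mapsto g_0 G_1 N$ factors through $G_0/G_1$ (the composition kills $G_1$). Hence $G_0 N / G_1 N$ is a quotient of $G_0/G_1$, so it is cyclic and its order divides $|G_0/G_1|$, which is coprime to $p$. The image of $\tau$ generates it. Finally, for (4), applying the quotient map to the relation $\sigma \tau \sigma^{-1} = \tau^p$ in $G$ yields the corresponding relation between the images of $\sigma$ and $\tau$ in $G/N$, which is exactly the required condition $\bar\tau^{\bar\sigma} = \bar\tau^p$.

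With all five conditions verified, $G/N$ is potentially $p$-realizable with the indicated tame structure, completing the argument.
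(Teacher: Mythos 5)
Your proof is correct and follows essentially the same route as the paper: push the filtration forward to $G/N \supseteq G_0N/N \supseteq G_1N/N$, use the third isomorphism theorem for the top quotient, observe that $(G_0N/N)/(G_1N/N)$ is a quotient of $G_0/G_1$ (hence cyclic of order prime to $p$, generated by the image of $\tau$), and note that the tame relation and the $p$-group condition descend. No substantive difference from the paper's argument.
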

\begin{proof}
Suppose $G$ has tame structure $G \supseteq G_0 \supseteq G_1$ and $N \trianglelefteq G$.  It suffices to show that $G/N \supseteq G_0 N / N \supseteq G_1 N / N$ is a tame structure on $G/N$.

By the third isomorphism theorem, $(G/N) / (G_0N/N) \cong G / (G_0 N)$ is a quotient of $G / G_0$ and thus cyclic, generated by the image of $\sigma$.
The natural map $G_0 \to (G_0 N / N) / (G_1 N / N) \cong (G_0 N) / (G_1 N) \cong G_0 / (G_1(G_0 \cap N))$ has kernel containing $G_1$, showing that $(G_0 N / N) / (G_1 N / N)$ is cyclic and generated by the image of $\tau$.

Since the relation $\tau^\sigma = \tau^p$ holds in $G$, it also holds for the images of $\sigma$ and $\tau$ in $G/N$.  Finally, $G_1 N / N \cong G_1 / (G_1 \cap N)$ is a $p$-group since $G_1$ is.
\end{proof}

If $G$ is potentially realizable, the maximal choice for $G_1$ is the $p$-core $V$.
We may always enlarge a tame structure on $G$ to make $G_1 = V$:

\begin{proposition} \label{prop:Vsuffices}
If $G \supseteq G_0 \supseteq G_1$ is a tame structure on $G$, so is $G \supseteq G_0 V \supseteq V$.
\end{proposition}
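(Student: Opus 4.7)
The plan is to verify the five conditions in the definition of a tame structure for the filtration $G \supseteq G_0 V \supseteq V$, using the given tame structure $G \supseteq G_0 \supseteq G_1$. The entire argument hinges on one small observation, which I would establish first: the subgroup $G_1$ lies inside $V$. Indeed, $G_1$ is normal in $G$ (by condition (1) of the given tame structure) and is a $p$-group (by condition (5)), so it is a normal $p$-subgroup of $G$ and is therefore contained in the $p$-core $V$.

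With that in hand, condition (1) is immediate: $V$ is normal in $G$ by definition of the $p$-core, and $G_0 V$ is normal as the product of two normal subgroups. For condition (5), $V$ is a $p$-group by definition.

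For conditions (2)--(4) I would exhibit explicit generators. The natural candidate for the cyclic quotient $G/(G_0 V)$ is the image $\bar\sigma$ of $\sigma$: since $G/(G_0 V)$ is a quotient of $G/G_0$, which is cyclic generated by the image of $\sigma$, it too is cyclic with generator $\bar\sigma$, giving (2). For (3), the third isomorphism theorem gives $(G_0 V)/V \cong G_0/(G_0 \cap V)$, and since $G_1 \subseteq G_0 \cap V$ from the observation above, this is a further quotient of $G_0/G_1$. Hence it is cyclic of order dividing $|G_0/G_1|$, which is prime to $p$, and is generated by the image $\bar\tau$ of $\tau$. For (4), the relation $\tau^\sigma = \tau^p$ in $G$ descends to $\bar\tau^{\bar\sigma} = \bar\tau^p$ in the quotient $(G_0 V)/V$.

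There is really no serious obstacle: the only place the argument could go wrong is in checking that $G_1 \subseteq V$, and that in turn is immediate from the definition of the $p$-core once one notices that $G_1$ is a normal $p$-subgroup of $G$. Everything else is bookkeeping with the third isomorphism theorem and the fact that cyclicity and $p'$-order are inherited by quotients.
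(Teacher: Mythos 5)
Your proof is correct and follows essentially the same route as the paper: check normality of $G_0V$, note $G/(G_0V)$ is a quotient of the cyclic $G/G_0$, and apply the isomorphism $(G_0V)/V \cong G_0/(G_0\cap V)$ (the paper goes slightly further, using that $|G_0/G_1|$ is prime to $p$ to get the equality $G_0\cap V = G_1$, while you only need the containment $G_1\subseteq V$, which you correctly justify via the $p$-core). One cosmetic quibble: the isomorphism $(G_0V)/V \cong G_0/(G_0\cap V)$ is the second isomorphism theorem (as the paper calls it), not the third.
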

\begin{proof}
Since $G_0$ and $V$ are normal subgroups of $G$, so is $G_0 V$.  Moreover, $G / (G_0 V)$ is a quotient of $G/G_0$ and thus cyclic generated by the same $\sigma \in G$.
Since the order of $G_0/G_1$ is prime to $p$, $G_0 \cap V = G_1$ and the second isomorphism theorem implies that $(G_0 V) / V \cong G_0 / G_1$ with the image of $\tau$ still generating $(G_0 V) / V$.
\end{proof}

Define $T = G / V$, the smallest possible tame quotient of $G$.

\begin{conjecture} \label{conj:semidirect}
If $G$ is potentially $p$-realizable, then $G \cong V \rtimes T$.
\end{conjecture}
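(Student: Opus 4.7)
\noindent\emph{Proof plan.}
By Proposition \ref{prop:Vsuffices} we may assume $G_1 = V$, so that $G_0/V$ is cyclic of some order $m$ coprime to $p$ and $G/G_0$ is cyclic of some order $n$. The question is whether the extension
\[
1 \to V \to G \to T \to 1
\]
splits. When $p \nmid |T|$ this is immediate from Schur--Zassenhaus, since $|V|$ is a $p$-power. In general the $p$-part of $|T|$ equals the $p$-part of $n$ and so is cyclic; the challenge is the case when this $p$-part is nontrivial.

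The plan is first to reduce to the case where $V$ is elementary abelian, by induction on $|V|$ via the Frattini filtration $V \supseteq W \supseteq 1$. Applying the statement inductively to $G/W$, whose $p$-core is the elementary abelian $V/W$, produces a complement $\widetilde T$ in $G/W$; lifting $\widetilde T$ to a subgroup of $G$ complementary to $V$ then amounts to a further splitting question for the extension $1 \to W \to \pi^{-1}(\widetilde T) \to \widetilde T \to 1$, with $|W| < |V|$, so iterating reduces to the base case.

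With $V$ elementary abelian, Gasch\"utz's theorem says it suffices to show that a Sylow $p$-subgroup $P$ of $G$ splits over $V$. Since $V \trianglelefteq P$ and $P/V$ is the Sylow $p$-subgroup of $T$, which equals the Sylow $p$-subgroup of the cyclic group $G/G_0$ and hence is cyclic of some order $p^a$, the question becomes: show that a cyclic extension $1 \to V \to P \to C_{p^a} \to 1$ of the specific form that arises here must split. Equivalently, the class in $H^2(C_{p^a}, V) \cong V^{C_{p^a}}/\mathrm{Nm}(V)$ must vanish, i.e., a specific element of $V$ must be a norm under the action of the $p$-part of $\sigma$.

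The hard part will be this last step. The only structural input available is the Iwasawa relation $\tau^\sigma = \tau^p$ together with the constraint that $V$ is the \emph{full} $p$-core of $G$ (not merely some normal $p$-subgroup), which bounds how much of $V$ the $p$-part of $\sigma$ can fix. Converting these observations into a vanishing theorem for $H^2(C_{p^a}, V)$, or exhibiting an explicit coboundary, is in my view the genuine content of the conjecture and the reason it appears here as a conjecture rather than a theorem.
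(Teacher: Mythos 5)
This statement is a conjecture in the paper: the author offers no proof, only the observation that the case $p \nmid \lvert T \rvert$ follows from Schur--Zassenhaus and that the statement has been verified computationally for $p \in \{3,5,7,11,13\}$ and $\lvert G \rvert \le 2000$. Your plan reproduces the Schur--Zassenhaus case (matching the paper's remark) and then proposes a standard reduction --- Frattini induction to make $V$ elementary abelian, then Gasch\"utz's theorem to reduce splitting of $G$ over $V$ to splitting of a Sylow $p$-subgroup $P$ over $V$, i.e.\ to the vanishing of a class in $H^2(C_{p^a}, V) \cong V^{C_{p^a}}/\mathrm{Nm}(V)$. But, as you yourself concede in the final paragraph, you do not prove that this class vanishes, and that vanishing is exactly the open content of the conjecture. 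The heuristic you offer --- that the Iwasawa relation $\tau^\sigma = \tau^p$ together with $V$ being the full $p$-core ``bounds how much of $V$ the $p$-part of $\sigma$ can fix'' --- is not an argument: nothing in the definition of a tame structure visibly forces the relevant element of $V^{C_{p^a}}$ to be a norm, and it is precisely because no such argument is known that the statement appears as a conjecture rather than a theorem. So the proposal has a genuine, acknowledged gap and does not establish the statement.

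One further caution about the reduction itself: the inductive step is not literally an instance of the conjecture. Applying the conjecture to $G/W$ is fine (its $p$-core is $V/W$, since the preimage of any normal $p$-subgroup of $G/W$ is a normal $p$-subgroup of $G$), and a complement to $W$ in $H = \pi^{-1}(\widetilde T)$ would indeed be a complement to $V$ in $G$ because $H \cap V = W$. However, $H$ is not known to be potentially $p$-realizable --- lifts of $\sigma, \tau$ from $\widetilde T$ to $H$ only satisfy $\tau^\sigma = \tau^p$ modulo $W$ --- so ``iterating'' the conjecture on $H$ is not justified; you would need a separate argument (or a stronger inductive hypothesis) to run the Frattini reduction. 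This is a secondary issue compared with the missing $H^2$-vanishing, but it means even the reduction to the elementary abelian case is not complete as written.
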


The conjecture holds for $p \in \{3, 5, 7, 11, 13\}$ and potentially $p$-realizable groups $G$ with $\lvert G \rvert \le 2000$.
It also holds when $T$ has order prime to $p$, by the Schur-Zassenhaus theorem.
Note that we may not replace $V$ with an arbitrary $G_1$, as the example of $C_{p^2} \supseteq C_p \supseteq C_p$ shows.
Moreover, attempting to decompose the pieces further fails.  The tame quotient $T$ is not necessarily the semidirect product of $G_0/G_1$ by $G/G_0$:
the quaternion group of order $8$ is $p$-realizable for $p \equiv 3 \pmod{4}$ but not a semidirect product of cyclic subgroups.

The conjecture has an interesting corollary for $p$-adic fields.
\begin{corollary}
Assume Conjecture \ref{conj:semidirect} holds, and suppose that $K/\Qp$ is Galois.
If $K_t/\Qp$ is the maximal tamely ramified subextension of $K/\Qp$ and $\Gal(K/K_t)$ is the $p$-core of $\Gal(K/\Qp)$ then there is a totally wildly ramified complement $K_0/\Qp$ with $K = K_0 K_t$.
\end{corollary}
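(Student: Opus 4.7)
The plan is to translate the group-theoretic splitting given by Conjecture \ref{conj:semidirect} into a subfield of $K$ with the desired properties. Set $G = \Gal(K/\Qp)$ and let $V$ be its $p$-core, so by hypothesis $V = \Gal(K/K_t)$ and $T = G/V = \Gal(K_t/\Qp)$. The conjecture supplies a subgroup $H \subseteq G$ with $G = V \rtimes H$, and I would take $K_0 := K^H$ as the candidate complement.

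The equality $K = K_0 K_t$ falls out of Galois theory: the compositum $K_0 K_t$ is the fixed field of $H \cap V$, which is trivial since $H$ is a complement to $V$, so $K_0 K_t = K$. Also, $[K_0 : \Qp] = [G:H] = |V|$, which is a power of $p$.

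The only substantive point is showing that $K_0/\Qp$ is totally ramified; since its degree is a $p$-power, this is equivalent to showing its maximal unramified subextension is $\Qp$. Letting $G_0 \subseteq G$ denote the inertia group, so that $K_u = K^{G_0}$ is the maximal unramified subextension of $K$, I would compute that the maximal unramified subextension of $K_0$ is $K_0 \cap K_u = K^{HG_0}$. Thus it suffices to prove $HG_0 = G$. The quotient map $G \twoheadrightarrow G/G_0$ factors through $G \twoheadrightarrow G/V = T$ because $V = G_1 \subseteq G_0$ (this uses the hypothesis $V = \Gal(K/K_t)$, i.e.\ that $V$ is the wild inertia, which is the maximal choice by Proposition \ref{prop:Vsuffices}). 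Under the composition $H \hookrightarrow G \twoheadrightarrow T$, the subgroup $H$ maps isomorphically onto $T$ by the semidirect product decomposition, hence the further surjection $T \twoheadrightarrow G/G_0$ shows $H$ surjects onto $G/G_0$, giving $HG_0 = G$ as required.

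The argument is essentially a bookkeeping exercise once the conjecture is in hand; there is no real obstacle beyond ensuring that the splitting $G = V \rtimes H$ is used at the level of the correct normal subgroup, namely the $p$-core rather than an arbitrary $G_1$. The key reason this works is that the complement $H$ is forced to carry all of the tame structure of $G$, and in particular all of the unramified quotient, leaving $K_0 = K^H$ with nothing tame left.
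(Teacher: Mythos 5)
Your argument is correct and complete: the paper states this corollary without proof, and your derivation---taking $K_0 = K^H$ for a complement $H$ of the $p$-core $V$ supplied by Conjecture \ref{conj:semidirect}, then checking $K_0K_t = K^{H\cap V} = K$ and $K_0\cap K_u = K^{HG_0} = \Qp$ (using $V = \Gal(K/K_t) \subseteq G_0$ so that $HG_0 \supseteq HV = G$), with the $p$-power degree forcing total ramification to be totally wild---is exactly the intended translation of the group-theoretic splitting through the Galois correspondence. No gaps; at most one could note explicitly that the abstract isomorphism $G \cong V \rtimes T$ does yield an internal complement because $V$ is the $p$-core and hence identified under any isomorphism, which you implicitly and correctly assume.
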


\subsection{Enumerating small examples} \label{ssec:enum_pot}

The first step toward counting $p$-adic fields by Galois group is computing a list of potential $G$.
Since GAP's database of small groups \cite{smallgroups} can identify groups of order $n$ for $n \le 2000$ except
$n = 512, 1024, 1536$, groups of these orders were screened.

When $n$ is prime to $p$, we may use the classification of metacyclic groups \cite{hempel:00a}*{Lemma 2.1} to screen $G$.  This process is described in Algorithm \ref{alg:tame}.

\begin{algorithm}
\SetKw{KwFrom}{from}
\SetKw{KwBy}{by}
\SetKwData{id}{id}\SetKwData{step}{step}\SetKwData{groups}{groups}
\SetKwFunction{sorted}{sorted}\SetKwFunction{gcd}{gcd}
\SetKwInOut{Input}{Input}\SetKwInOut{Output}{Output}
\Input{An integer $n$}
\Output{The list of potentially $p$-realizable groups of order $n$ with trivial $G_1$}
\BlankLine
\groups = []\;
\For{positive $k, m$ with $n = k \cdot m$}{
\If{$m$ divides $p^k - 1$}{
\step = $m \  / $ \gcd{$m$, $p - 1$}\;
\For{$\ell$ \KwFrom $0$ \KwTo $m$ \KwBy \step}{
Find the GAP \id of $\langle x, y \;\vert\; x^k = y^\ell, y^m = 1, y^x = y^p \rangle$\;
Add \id to \groups if not present\;
}}}
\Return{\sorted{\groups}}\;
\caption{Finding potentially $p$-realizable groups: the tame case} \label{alg:tame}
\end{algorithm}

When $n$ has $p$-adic valuation $1$, we can build groups as extensions of metacyclic groups.
Any group of order $n$ will arise either as an extension of a group of order $n/p$ by $C_p$, or
as a metacyclic group produced by Algorithm \ref{alg:tame}.  The extensions are computable using
GAP's \texttt{Extensions} method, and we describe the process in Algorithm \ref{alg:ext}.

\begin{algorithm}
\SetKwData{groups}{groups}\SetKwData{T}{T}\SetKwData{G}{G}\SetKwData{id}{id}
\SetKwFunction{sorted}{sorted}\SetKwFunction{Extensions}{Extensions}
\SetKwInOut{Input}{Input}\SetKwInOut{Output}{Output}
\Input{An integer $n$ with $v_p(n) = 1$}
\Output{The list of potentially $p$-realizable groups of order $n$}
\BlankLine
\groups = []\;
\ForEach{tame group \T of order $n/p$}{
\ForEach{homomorphism $\phi$ from \T to $\Aut(C_p)$}{
\ForEach{group \G in \Extensions{\T, $\phi$}}{
\If{$x$ and $y$ lift to elements of $G$ satisfying the tame relation}{
Find the GAP \id of \G\;
Add \id to \groups if not present\;
}}}}
\ForEach{tame group \T of order $n$}{
Find the \id of \T\;
Add \id to \groups if not present\;
}
\Return{\sorted{\groups}}\;
\caption{Finding potentially $p$-realizable groups: valuation $1$} \label{alg:ext}
\end{algorithm}

When $n$ has larger $p$-adic valuation, this extension method becomes more complicated,
since there are more possibilities for $V$.  Moreover, some of the possible $V$
are not elementary abelian $p$-groups, so GAP's \texttt{Extensions} method does not apply.
While it would be possible to try to construct the extensions manually using
GAP's \texttt{GrpConst} package \cite{grpconst}, in practice it suffices to check whether each
group in the small group database \cite{smallgroups} with order $n$ is potentially $p$-realizable
using Algorithm~\ref{alg:preal}.

\begin{algorithm}
\SetKw{continue}{continue}
\SetKwData{N}{N}\SetKwData{D}{D}\SetKwData{G}{G}\SetKwData{V}{V}\SetKwData{T}{T}\SetKwData{True}{True}\SetKwData{False}{False}
\SetKwFunction{DerivedSubgroup}{DerivedSubgroup}
\SetKwFunction{IsCyclic}{IsCyclic}
\SetKwFunction{PCore}{PCore}
\SetKwFunction{NormalSubgroupsContaining}{NormalSubgroupsContaining}
\SetKwInOut{Input}{Input}\SetKwInOut{Output}{Output}
\Input{A group \G}
\Output{Whether or not \G is potentially $p$-realizable.}
\BlankLine
\V = \PCore{\G}\;
\T = \G / \V\;
\If{\IsCyclic{\T}}{
\Return{\True}\;
}
\D = \DerivedSubgroup{\G}\;
\If{\IsCyclic{\D}}{
\For{\N in \NormalSubgroupsContaining{\D}}{
\If{\IsCyclic{\N} and \IsCyclic{\G/\N}}{
Let $e$ be the order of \N and $f$ the order of \G/\N\;
Let $a$ be the exponent in the conjugation action of \G/\N on \N\;
Find $b$ with $a^b \equiv p \pmod{e}$, or \continue if not possible\;
Let $m$ be the order of $a \pmod{e}$\;
\If{$\gcd(m, b, f) = 1$}{
\Return{\True}\;
}}}}
\Return{\False}\;
\caption{Determining whether a group is potentially $p$-realizable} \label{alg:preal}
\end{algorithm}

\section{The absolute Galois group of a local field} \label{sec:absgal}

Our approach to counting $p$-adic fields rests on the following description of the absolute Galois group of $\Qp$.  Let $p \ne 2$, $k$ be a $p$-adic field, $N = [k : \Qp]$, $q$ the cardinality of the residue field of $k$,
and $p^s$ the order of the group of $p$-power roots of unity in the maximal tame extension $k^t/k$.  Choose $g, h \in \Zp$ with
\[
\zeta^\sigma = \zeta^g, \zeta^\tau = \zeta^h \text{ for } \zeta \in \mu_{tr},
\]
where $\sigma, \tau \in \Gal(k^t/k)$ with $\tau^\sigma = \tau^q$ as in \cite{iwasawa:55a}, and $\mu_{tr}$ the $p$-power roots of unity in $k^t$.

Let $\pi = \pi_p$ be the element of $\hatZ = \prod_\ell \ZZ_\ell$ with coordinate $1$ in the $\Zp$-component and $0$ in the $\ZZ_\ell$ components for $\ell \ne p$.  Then for $x, y$ in a profinite group\footnote{See \cite{ribes-zalesskii:profinite_groups}, especially sections 3.3 and 4.1, for relevant background on profinite groups.}, set
\[
\langle x, y \rangle = (x^{h^{p-1}}y x^{h^{p-2}} y \cdots x^h y)^{\frac{\pi}{p-1}}.
\]

\begin{theorem}[{\cite{neukirch-schmidt-wingberg:CohomologyOfNumberFields}*{Thm. 7.5.14}}] \label{thm:GQp_desc}
The absolute Galois group $\Gal(\bar{k}/k)$ is isomorphic to the profinite group generated by $N+3$ generators $\sigma, \tau, x_0, \dots, x_N$, subject to the following conditions and relations.
\begin{enumerate}
\item The closed subgroup topologically generated by $x_0, \dots, x_N$ is normal in $G$ and is a pro-$p$-group.
\item The elements $\sigma, \tau$ satisfy the tame relation
\[
\tau^\sigma = \tau^q.
\]
\item The generators satisfy the following wild relation.  If $N$ is even then
\[
x_0^\sigma = \langle x_0, \tau \rangle^g x_1^{p^s} [x_1, x_2] [x_3, x_4] \dots [x_{N-1}, x_N].
\]
If $N$ is odd then 
\[
x_0^\sigma = \langle x_0, \tau \rangle^g x_1^{p^s} [x_1, y_1] [x_2, x_3] \dots [x_{N-1}, x_N],
\]
where $g$ and $s$ are defined above and $y_1$ is an explicit element in the span of $x_1, \sigma,$ and $\tau$, specified below when $k=\Qp$.  
\end{enumerate}
\end{theorem}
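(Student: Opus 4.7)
The plan is to establish this presentation by decomposing $G_k = \Gal(\bar{k}/k)$ into its tame quotient and its wild inertia subgroup, pinning down the structure of each piece via Galois cohomology and local class field theory, and then assembling them. First, I would recall that the tame quotient $\Gal(k^t/k)$ is generated by a Frobenius lift $\sigma$ and a tame inertia generator $\tau$ with relation $\tau^\sigma = \tau^q$, since Frobenius acts on tame characters by the $q$-th power map. The integer $g$ appearing in the wild relation is fixed by the condition $\zeta^\sigma = \zeta^g$ on $p$-power roots of unity, i.e.\ it encodes the $p$-adic cyclotomic character of $\sigma$.

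Second, I would analyze the wild inertia, which is pro-$p$, via local Tate duality together with Shafarevich--Demushkin theory. Tate local duality gives $H^2(G_k, \mu_p) \cong \FF_p$, which forces at most one defining $p$-relation; meanwhile $H^1(G_k, \FF_p) \cong \mathrm{Hom}(k^\times/k^{\times p}, \FF_p)$ has $\FF_p$-dimension $N+2$ when $k \supseteq \mu_p$ and $N+1$ otherwise, which together with the two tame generators produces the generator count $N+3$. The symplectic structure of the cup product $H^1 \otimes H^1 \to H^2$, computable via the Hilbert symbol on $k^\times/k^{\times p}$, is what produces the commutator product $[x_1,x_2][x_3,x_4]\cdots$ in the wild relation. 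The $x_1^{p^s}$ term captures the obstruction to the cup product being a perfect alternating pairing when $s>0$, while the $\langle x_0,\tau\rangle^g$ prefix records how the $\sigma$-action on the pro-$p$ part is twisted by the cyclotomic character.

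Third, once the generators and relations have been identified, one must show that the abstract profinite group $F$ on $\sigma, \tau, x_0, \ldots, x_N$ modulo the listed relations actually agrees with $G_k$. There is a tautological surjection $F \twoheadrightarrow G_k$ because $G_k$ satisfies every listed relation; to upgrade this to an isomorphism I would compare cohomology. Both groups have cohomological dimension $2$, the same $\FF_p$-cohomology in degrees $0, 1, 2$ (dimensions $1$, $N+2$, $1$), and the same Euler--Poincar\'e characteristic, so a standard rigidity argument on pro-$p$ completions forces the kernel of $F \twoheadrightarrow G_k$ to be trivial.

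The main obstacle is the odd $N$ case, where the naive Demushkin-style symplectic presentation fails and one must introduce the auxiliary element $y_1$ in the span of $x_1, \sigma, \tau$. Constructing $y_1$ explicitly, and verifying that the resulting wild relation still represents the correct nonzero class in $H^2(G_k, \mu_p)$ under Tate duality, is the most delicate computation: it depends on a careful choice of generator of $\mu_{p^s}$ and on the precise interaction between $\tau$ and the wild inertia inside $G_k$. Once $y_1$ is in place, everything else reduces to bookkeeping of local class field theory data and the comparison of cohomology sketched above.
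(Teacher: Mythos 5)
This statement is not proved in the paper at all: it is quoted verbatim from Neukirch--Schmidt--Wingberg (Thm.\ 7.5.14), i.e.\ the Jannsen--Wingberg theorem, and the paper treats it as a black box. So the only fair comparison is with the literature proof, and measured against that your sketch has two genuine gaps. First, the surjection $F \twoheadrightarrow \Gal(\bar{k}/k)$ is not ``tautological'': to get it you must exhibit elements $\sigma, \tau, x_0, \dots, x_N$ of the absolute Galois group satisfying precisely the stated wild relation, and that is the hard content of the theorem, not a starting point. Tate duality and Demushkin theory do explain the shape of the relation for the maximal pro-$p$ quotient $G_k(p)$ (your dimension counts are essentially the counts for $G_k(p)$, not for the full group), but lifting a Demushkin-type presentation to the whole absolute Galois group, compatibly with the action of the tame quotient --- which is exactly where the twist $\langle x_0,\tau\rangle^g$ and the element $y_1$ come from --- is the delicate part of Jannsen--Wingberg's work and is nowhere addressed beyond being named an ``obstacle.''

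Second, the rigidity step fails as stated. The criterion ``isomorphism on $H^1(\cdot,\FF_p)$ and injection on $H^2(\cdot,\FF_p)$ implies isomorphism'' is a pro-$p$ statement; $\Gal(\bar{k}/k)$ is not pro-$p$, and $\FF_p$-cohomology (or cohomological dimension together with an Euler characteristic) cannot see the prime-to-$p$ tame structure at all, so it cannot force the kernel of $F \to \Gal(\bar{k}/k)$ to be trivial. Note also that the closed normal subgroup generated by $x_0,\dots,x_N$ is a free pro-$p$ group of infinite rank, so no finite generator count on the wild part alone can close the argument. The actual proof proceeds quite differently: one views $\Gal(\bar{k}/k)$ as an extension of the known tame group $\langle \sigma,\tau \mid \tau^\sigma = \tau^q\rangle$ by the free pro-$p$ group $\Gal(\bar{k}/k^t)$, studies the tame action via Iwasawa-module computations over the tame tower, and invokes a characterization theorem for such filtered profinite groups (the ``Demushkin formation'' machinery of NSW Chapter 7.5) to identify the extension class and hence the presentation. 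Your outline would need to be replaced by, not merely supplemented with, that apparatus.
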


We will mostly be interested in the case where $k = \Qp$; recall that we write $\GQp$ for $\Gal(\bar{\QQ}_p/\Qp)$.
Now $q=p$, $g=1$ and $h$ is a $(p-1)$st root of unity in $\Zp$.
In order to define $y_1$, let $\Qp^t$ be the maximal tamely ramified extension of $\Qp$ and define
$\beta : \Gal(\Qp^t/\Qp) \to \Zp^\times$ by setting $\beta(\sigma) = 1$ and $\beta(\tau) = h$.
For $\rho$ in the subgroup of $\GQp$ generated by $\sigma$ and $\tau$ and $x \in \GQp$, set
\[
\{x, \rho\} = (x\rho^2x^{\beta(\rho)}\rho^2\dots x^{\beta(\rho^{p-2})}\rho^2)^{\frac{\pi}{p-1}}.
\]
Let $\pi_2 \in \hatZ$ be the element with $\pi_2 \hatZ = \ZZ_2$, and set $\tau_2 = \tau^{\pi_2}$ and $\sigma_2 = \sigma^{\pi_2}$.
Set
\begin{equation} \label{eq:y1def}
y_1 = x_1^{\tau_2^{p+1}}\{x_1, \tau_2^{p+1}\}^{\sigma_2\tau_2^{(p-1)/2}}\{\{x_1, \tau_2^{p+1}\}, \sigma_2\tau_2^{(p-1)/2}\}^{\sigma_2\tau_2^{(p+1)/2}+\tau_2^{(p+1)/2}}.
\end{equation}
The wild relation for $\Qp$ then becomes
\begin{equation} \label{eq:wild_rel}
x_0^\sigma = \langle x_0, \tau \rangle x_1^p [x_1, y_1]
\end{equation}

We can use this description of absolute Galois groups to show that any potentially $p$-realizable group occurs as a Galois group over some $k$ with $k/\Qp$ finite.

\begin{proposition} \label{prop:potpreal}
If $G$ is potentially $p$-realizable and $V/W$ has dimension $m$ then $G$ will be realized over $k$ if $[k:\Qp] \ge 2m+1$.
\end{proposition}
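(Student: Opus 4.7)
The plan is to build a surjection $\phi \colon \Gal(\bar k/k) \twoheadrightarrow G$ from the Jannsen--Wingberg presentation of Theorem~\ref{thm:GQp_desc}, where $N = [k:\Qp]$. By Proposition~\ref{prop:Vsuffices} I may assume the tame structure on $G$ has $G_1 = V$, with generators $\sigma_G \in G$ and $\tau_G \in G_0$ satisfying $\tau_G^{\sigma_G} = \tau_G^p$; write $q = p^f$ for the size of the residue field of $k$, and $e_0 = |G_0/V|$.

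For the tame part of the presentation, I would set $\phi(\tau) = \tau_G$ and $\phi(\sigma) = \sigma_G^a$, choosing $a$ congruent to $f$ modulo $\mathrm{ord}_{e_0}(p)$. The congruence forces $\tau_G^{\sigma_G^a} = \tau_G^{p^a} = \tau_G^q$, so the tame relation $t^s = t^q$ holds for $s = \phi(\sigma)$, $t = \phi(\tau)$. The remaining freedom in $a$ modulo $|\sigma_G|$ is used to keep $\sigma_G^a$ generating $T/\langle\bar\tau_G\rangle$, so that $\langle s,t\rangle V = G$; that such an $a$ exists is an exercise in the (very rigid) metacyclic structure of $T$ that the tame relation $\tau_G^{\sigma_G} = \tau_G^p$ already provides.

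The wild part is the crux. I set $\phi(x_0) = \phi(x_1) = 1$. Because $t$ has order coprime to $p$ while $\pi = \pi_p$ projects onto the $p$-primary component of $\hatZ$, a direct calculation gives $\phi(\langle x_0, \tau\rangle^g) = t^{g\pi} = 1$. Applied iteratively to the formula~\eqref{eq:y1def} for $y_1$, the same kind of cancellation yields $\phi(y_1) = 1$: each $\{\cdot, \rho\}$ with trivial first argument collapses to $\phi(\rho)^{2\pi}$, and the relevant $\rho$'s (built from $\sigma_2, \tau_2$ via the idempotent $\pi_2$) map to prime-to-$p$ elements of $G$, which are killed by $\pi_p$. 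Both the $x_1^{p^s}$ factor and the bracket $[x_1, y_1]$ in~\eqref{eq:wild_rel} thus disappear, and the wild relation reduces to $\prod_i [\phi(x_{2i}), \phi(x_{2i+1})] = 1$. Setting $\phi(x_{2i}) = 1$ for every $i \geq 1$ makes each commutator trivial, so the wild relation is satisfied unconditionally.

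Finally, the hypothesis $N \geq 2m+1$ supplies enough freedom to generate $V$. The unconstrained images are those at odd indices $\geq 3$; there are $N/2 - 1 \geq m$ of them when $N$ is even (using $N \geq 2m+2$) and $(N-1)/2 \geq m$ when $N$ is odd. I place lifts of an $\Fp$-basis of $V/W$ into $m$ of these slots and the identity into the rest. By the Burnside basis theorem these $m$ elements generate the $p$-group $V$, so together with $s, t$ generating $T = G/V$ the map $\phi$ is surjective, realizing $G$ as $\Gal(K/k)$ for $K$ the fixed field of $\ker\phi$. The main obstacle is the interplay between the tame congruence and the coprimality constraint on $a$; once this metacyclic bookkeeping is dispatched, the wild step is essentially combinatorial, and the factor of $2$ in $2m+1$ comes precisely from pairing even-indexed slots against odd-indexed ones in~\eqref{eq:wild_rel} so as to kill the commutators while leaving enough odd slots free.
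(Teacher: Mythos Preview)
Your strategy mirrors the paper's: kill one member of each commutator pair in the wild relation so that it collapses, then use the remaining $\ge m$ free wild generators to carry a Burnside basis of $V/W$. (The paper keeps $x_2,x_4,\dots,x_{2m}$ free and sends the odd-indexed $x_i$ together with the tail to $1$; you do the mirror image. This is cosmetic, and your computation of $\phi(y_1)$ is unnecessary since $\phi(x_1)=1$ already kills $[x_1,y_1]$.)

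The genuine gap is the tame step, which you correctly flag as ``the main obstacle'' but then dismiss as bookkeeping. The required $a$ need not exist. Take $p=3$, $G=D_8$ with $\tau_G$ of order $4$ and $\sigma_G$ a reflection (so $V=1$, $m=0$, $e_0=4$, $\mathrm{ord}_{4}(3)=2$), and let $k=\QQ_9$ (so $f=2$). Your congruence forces $a\equiv 0\pmod 2$, but then $\sigma_G^{a}$ is trivial in $T/\langle\bar\tau_G\rangle\cong C_2$ and cannot generate it. In fact no choice works: any $s,t\in D_8$ with $t^{s}=t^{9}=t$ commute, and since $3\nmid 8$ there is no wild ramification available, so $D_8$ is not realized over $\QQ_9$ even though $[\QQ_9:\QQ_3]=2\ge 2m+1=1$. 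The paper's own proof shares this gap, hidden behind the phrase ``potentially $p$-realizable then implies that we may extend.'' Both arguments do go through verbatim whenever the residue field of $k$ is $\Fp$ (for instance $k/\Qp$ totally ramified of degree $\ge 2m+1$), since then $q=p$ and $a=1$ works; that already yields the existential conclusion the paper actually uses, but not the proposition as literally stated.
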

\begin{proof}
It suffices to exhibit a surjective homomorphism $\Gal(\bar{k}/k) \to G$, which we define by specifying the images of the generators.
Map $x_0, x_1, x_3, x_5, \dots, x_{2m+1}$ and $x_{2m+2}, \dots, x_N$ to $1$.  Then the wild relation is automatically satisfied, and we may freely choose the images of $x_2, \dots, x_{2m}$.
As long as we map them to elements of $V$ that project to an $\Fp$-basis of $V/W$, Burnside's basis theorem implies that they will generate $V$.  The fact that $G$ is potentially $p$-realizable
then implies that we may extend this homomorphism to a surjective map on all of $\Gal(\bar{k}/k)$.
\end{proof}
Note that one can decrease $2m+1$ in some cases using the representation of $T$ on $V$, and even then this bound is certainly not sharp.

\section{Counting $p$-adic fields} \label{sec:counting}

\subsection{Parameterizing extensions}
Following \cite{yamagishi:95a}, we count the extensions of $\Qp$ with Galois group $G$ by counting the surjections $\GQp \to G$, modulo automorphisms of $G$.
We can then translate the description of $\GQp$ from Theorem \ref{thm:GQp_desc} to a counting problem in $G$.  Let $n$ be the order of $G$ and factor $n = u_p p^r = u_2 2^s$ with $(u_p,p) = 1$ and $u_2$ odd.
Using the Chinese remainder theorem, define integers $a$ and $b$ so that
\begin{align*}
a &= 0 \pmod{u_p} & (p-1)a &= 1 \pmod{p^r} \\
b &= 0 \pmod{u_2} & b &= 1 \pmod{2^s}.
\end{align*}

Since the images of $x_0$ and $x_1$ have $p$-power order, they lie in $V$.% is the maximal normal $p$-group in $G$, it contains the images of $x_0$ and $x_1$.  
\begin{definition} \label{def:TGXG}
Define $T_G$ to be the set of pairs $(\sigma, \tau) \in G^2$ so that
\begin{enumerate}
\item $\tau^{\sigma} = \tau^p$,
\item the images of $\sigma$ and $\tau$ in $G/V$ generate $G/V$.
\end{enumerate}

Define $X_G$ to be the set of quadruples $(\sigma, \tau, x_0, x_1) \in G^4$ so that
\begin{enumerate}
\item $\tau^{\sigma} = \tau^p$,
\item $x_0, x_1 \in V$,
\item $\sigma, \tau, x_0, x_1$ generate $G$,
\item $x_0^{\sigma} = \langle x_0, \tau \rangle x_1^p [x_1, y_1]$,
\end{enumerate}
where $y_1$ is defined as in \eqref{eq:y1def}.
\end{definition}
Note that we may compute the projections $\pi/(p-1)$ and $\pi_2$ by raising to the $a$ and $b$ powers respectively.

\begin{proposition}
The Galois extensions of $\Qp$ with Galois group $G$ are in bijection with the orbits of $X_G$ under the action of $\Aut(G)$.
\end{proposition}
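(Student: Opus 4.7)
The plan is to first reduce the problem, via Galois theory, to counting continuous surjections $\GQp\twoheadrightarrow G$ modulo $\Aut(G)$, and then to parameterize these surjections by quadruples in $X_G$ using the profinite presentation of $\GQp$ provided by Theorem~\ref{thm:GQp_desc} (with $k=\Qp$, so that $N=1$, $q=p$, $g=1$, and the wild relation takes the form \eqref{eq:wild_rel}).

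By Galois theory, isomorphism classes of Galois extensions $K/\Qp$ with $\Gal(K/\Qp)\cong G$ are in bijection with the open normal subgroups $\mathcal{N}\trianglelefteq\GQp$ satisfying $\GQp/\mathcal{N}\cong G$. Each such $\mathcal{N}$ appears as the kernel of a continuous surjection $\GQp\twoheadrightarrow G$, and two surjections share a kernel if and only if they differ by post-composition with an element of $\Aut(G)$. Hence extensions correspond to $\Aut(G)$-orbits of continuous surjections.

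By Theorem~\ref{thm:GQp_desc}, a continuous homomorphism $\phi:\GQp\to G$ is determined by the four images $(\phi(\sigma),\phi(\tau),\phi(x_0),\phi(x_1))$, subject to the relations: the tame relation (with $q=p$) gives condition (1) of $X_G$; the wild relation \eqref{eq:wild_rel} gives (4); and surjectivity translates to $\sigma,\tau,x_0,x_1$ generating $G$, which is (3). The remaining ingredient of the presentation — that $x_0,x_1$ topologically generate a closed normal pro-$p$-subgroup — translates to condition (2), namely $x_0,x_1\in V$. For the forward direction, the $\phi$-image of that closed subgroup is a normal $p$-subgroup of $\phi(\GQp)=G$, which by definition of the $p$-core must lie in $V$. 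Conversely, if $x_0,x_1\in V$ then the normal closure of $\{x_0,x_1\}$ in $\langle\sigma,\tau,x_0,x_1\rangle$ is contained in $V$ (since $V\trianglelefteq G$) and hence is a $p$-group, verifying the structural requirement needed to invoke the universal property of the presentation and produce a continuous homomorphism $\GQp\to G$ from the quadruple.

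Combining these observations identifies continuous surjections $\GQp\twoheadrightarrow G$ bijectively with $X_G$, and passing to orbits under $\Aut(G)$ yields the claimed bijection. The main delicate point, where I would concentrate the write-up, is the translation between the profinite structural condition on $x_0,x_1$ and the finite-group condition $x_0,x_1\in V$; this relies crucially on $V$ being the unique maximal normal $p$-subgroup of $G$, so that normality in $G$ combined with being a $p$-group forces containment in $V$.
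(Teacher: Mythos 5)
Your proof is correct and follows essentially the same route as the paper: reduce via Galois theory and normality of the kernels to counting continuous surjections $\GQp \twoheadrightarrow G$ modulo $\Aut(G)$, then use the presentation of Theorem~\ref{thm:GQp_desc} to identify such surjections with quadruples in $X_G$. Your explicit justification that the pro-$p$ normal-subgroup condition on $x_0,x_1$ translates exactly to $x_0,x_1\in V$ (via maximality of the $p$-core) is a detail the paper leaves implicit, but it is the same argument.
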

\begin{proof}
Finite extensions $K$ of $\Qp$ within a fixed algebraic closure of $\Qp$ correspond to finite index subgroups $H_K$ of $\GQp$.  The condition that $K$ is Galois with Galois group $G$ translates
to the condition that $H_K$ is normal with $\GQp/H_K \cong G$.  Different subgroups $H$ cannot yield isomorphic $K$ since an isomorphism of fields would extend to an automorphism of $\bQp$
conjugating one $H$ to the other, which is impossible since both are normal.  Finally, elements of $X_G$ correspond to homomorphisms $\GQp \to G$ by the description of $\GQp$ in Theorem \ref{thm:GQp_desc},
and the kernel of such a homomorphism is preserved by composition with an automorphism of $G$.
\end{proof}

We will be inductively constructing representatives for the orbits of $\Aut(G)$ on $X_G$; write $Y_G$ for a choice of such representatives.  Then $Y_G$ will be in bijection with the extensions of $\Qp$ with Galois group $G$.

\subsection{Abelian groups} \label{ssec:ab}

When $G$ is abelian, the wild relation simplifies to $x_0 = x_1^p$.  Thus $x_0$ is determined by $x_1$, and the wild relation imposes no constraint on $x_1$.
The order of $\tau$ must divide $p-1$, the order of $x_1$ must be a power of $p$, and the three elements $\sigma, \tau$, and $x_1$ must generate $G$.

Write
\begin{equation} \label{eq:Gdef}
G \cong \prod_{\ell} \prod_{i = 1}^{m_\ell} \ZZ / \ell^{n_{\ell, i}} \ZZ,
\end{equation}
where $n_{\ell, 1} \le \dots \le n_{\ell, m_\ell}$ for each $\ell$.  We can enumerate the elements of $X_G$ as a function of the $n_{\ell, i}$.  Let $\alpha_\ell$ be the element of $G$ with a $1$ in the $\ell, 1$ component
and $0$s elsewhere, and let $\beta_\ell$ be the element with a $1$ in the $\ell, 2$ component and $0$s elsewhere.  Since we will be analyzing the $\ell$-components separately, we drop $\ell$ from the notation, writing $a$ for $n_{\ell, 1}$, $b$ for $n_{\ell, 2}$, $\alpha$ for $\alpha_\ell$ and $\beta$ for $\beta_\ell$.
\begin{enumerate}
\item In the case $m_\ell \ge 3$, set $c_\ell = 0$ and $C_\ell = \{\}$.
\item In the case $m_\ell = 2$, if $a \ne b$ and $\ell=p$, set $c_\ell = 2$ and \\ $C_\ell = \{(\alpha, 0, p\beta, \beta), (\beta, 0, p\alpha, \alpha)\}$.
\item \label{case3} In the case $m_\ell = 2$, if $a \ne b$ and $\ell^b$ divides $p-1$, set $c_\ell = 2$ and \\ $C_\ell = \{(\alpha, \beta, 0, 0), (\beta, \alpha, 0, 0)\}$.
\item In the case $m_\ell = 2$, if $a = b$ and $\ell=p$, set $c_\ell = 1$ and $C_\ell = \{(\alpha, 0, p\beta, \beta)\}$.
\item In the case $m_\ell = 2$, if $\ell^a$ divides $p-1$ but case (\ref{case3}) does not apply, set $c_\ell = 1$ and $C_\ell = \{(\beta, \alpha, 0, 0)\}$.
\item In the case $m_\ell = 2$, if $\ell \ne p$ and $\ell^a \nmid p-1$, set $c_\ell = 0$ and $C_\ell = \{\}$.
\item In the case $m_\ell = 1$, if $\ell = p$, set $c_\ell = p^{a - 1}(p+1)$ and \\ $C_\ell = \{(\alpha, 0, pk\alpha, k\alpha) : 0 \le k < p^a\} \cup \{(pk\alpha, 0, p\alpha, \alpha) : 0 \le k < p^{a-1}\}$.
\item In the case $m_\ell = 1$, if $\ell^a$ divides $p-1$, set $c_\ell = \ell^{a - 1}(\ell+1)$ and \\ $C_\ell = \{(\alpha, k\alpha, 0, 0) : 0 \le k < \ell^a\} \cup \{(pk\alpha, \alpha, 0, 0) : 0 \le k < \ell^{a-1}\}$.
\item In the case $m_\ell = 1$, if $\ell^a$ does not divide $p-1$, set $c_\ell = \ggcd(\ell^a, p-1)$ and $C_\ell = \{(\alpha,\frac{\ell^a}{c_\ell} k \alpha, 0, 0) : 0 \le k < c_\ell\}$.
\end{enumerate}

\begin{proposition}
Let $G$ be abelian, with elementary factors as in \eqref{eq:Gdef}.
Then the number of Galois extensions $K/\Qp$ with Galois group $G$ is $\prod_\ell c_\ell$
and the set $\{\sum_\ell \eta_\ell : \eta_\ell \in C_\ell\}$ forms a set of representatives for the orbits of $\Aut(G)$ on $X_G$.
\end{proposition}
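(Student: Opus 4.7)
The plan is to first simplify the wild relation using commutativity, then decompose the counting problem by the primary components of $G$, and finally verify each of the enumerated cases.

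First, I would simplify the wild relation. Because $G$ is abelian, $x_0^\sigma = x_0$ and $[x_1, y_1]=1$, so the relation reduces to $x_0 = \langle x_0, \tau\rangle\, x_1^p$. Expanding $\langle x_0, \tau \rangle$ and reordering factors gives $(x_0^{h+h^2+\cdots+h^{p-1}}\tau^{p-1})^{\pi/(p-1)}$. The abelian form of the tame relation is $\tau = \tau^p$, so $\tau^{p-1}=1$; and because $h\in\Zp$ is a primitive $(p-1)$st root of unity with $h\not\equiv 1\pmod p$, the geometric sum $h+\cdots+h^{p-1} = h\,(h^{p-1}-1)/(h-1)$ vanishes in $\Zp$. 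Hence $\langle x_0, \tau\rangle = 1$ and the wild relation simplifies to $x_0 = x_1^p$. Consequently $X_G$ is in bijection with triples $(\sigma,\tau,x_1)\in G^3$ satisfying $\tau^{p-1}=1$, $x_1\in V$, and jointly generating $G$, with $x_0$ recovered as $x_1^p$.

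Next, I would factor the problem over primary components. Writing $G=\prod_\ell G_\ell$, so $\Aut(G)=\prod_\ell\Aut(G_\ell)$ acts componentwise, the three defining conditions decouple by $\ell$. For $\ell\ne p$ the component $x_{1,\ell}$ vanishes and one needs $(\sigma_\ell,\tau_\ell)$ to generate $G_\ell$ with $\tau_\ell$ of order dividing $\gcd(p-1,\exp G_\ell)$; for $\ell=p$ the component $\tau_p$ vanishes (being simultaneously of $p$-power order and of order dividing $p-1$), leaving the requirement that $(\sigma_p, x_1)$ generate $G_p = V$. The orbit count therefore factors as $\prod_\ell c_\ell$, and representatives are obtained as componentwise sums from sets $C_\ell$.

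It then remains to check each $C_\ell$ directly. When $m_\ell\ge 3$ the group $G_\ell$ cannot be two-generated, so $c_\ell=0$. For $m_\ell\le 2$ the heart of the argument is a standard orbit analysis for $\Aut(G_\ell)$ on generating tuples: any element of maximal order can be moved to a standard basis generator ($\alpha$ or $\beta$) by an automorphism, and the stabilizer of that generator acts transitively enough on the remaining coordinate to kill its projection onto the span of the fixed vector. The listed subcases distinguish whether the constrained coordinate (either $\tau_\ell$ or $x_1$) can itself be a generator of maximal order, which is governed by whether $\ell^a$ or $\ell^b$ divides $p-1$ and whether $\ell=p$. This produces either one family (when only $\sigma_\ell$ can carry the large generator) or two families (when either coordinate can), and the two families are disjoint because the property ``generator of $G_\ell$'' is preserved by $\Aut(G_\ell)$. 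Tallying the representatives in each of the nine listed configurations yields the stated values of $c_\ell$ and $C_\ell$.

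I expect the main technical obstacle to be the bicyclic case $m_\ell=2$ with $a<b$: the automorphism group of $\ZZ/\ell^a\times\ZZ/\ell^b$ is not $GL_2$ over a single ring, so one must compute the stabilizer of a generator of order $\ell^b$ explicitly and verify that the $v$-parameter in that stabilizer does in fact sweep out the $\beta$-coefficient of the other coordinate, collapsing it to a single representative in each subcase.
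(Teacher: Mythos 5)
Your reduction is exactly the paper's: you simplify the wild relation to $x_0 = x_1^p$ (you actually supply the computation $\langle x_0,\tau\rangle = 1$, which the paper only asserts at this point and carries out later in the tame-decoupled proposition), you note that $\tau^{p-1}=1$ and $x_1$ lies in the $p$-part, and you decompose over primary components with $\Aut(G)=\prod_\ell \Aut(G_\ell)$ acting componentwise, so that at $p$ the data is a generating pair $(\sigma_p,x_1)$ of $G_p$ and away from $p$ it is a generating pair $(\sigma_\ell,\tau_\ell)$ with $\tau_\ell^{p-1}=1$. Up to there everything is fine and matches the paper's proof.

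The gap is precisely at the step you flagged as the ``main technical obstacle'' and then assumed would go through: it does not. For $m_\ell=2$ with $a<b$ (cases (2) and (3)), the stabilizer of the long generator $\beta$ consists of the maps $\beta\mapsto\beta$, $\alpha\mapsto x\alpha+\ell^{b-a}y\beta$, so acting on a complementary element $c\alpha+d\beta$ it only translates $d$ by multiples of $\ell^{b-a}$; the residue $d\bmod \ell^{b-a}$ is an invariant and the $\beta$-coefficient is \emph{not} swept out. Concretely, $(\sigma,\tau,x_0,x_1)=(\beta,0,p(\alpha+\beta),\alpha+\beta)$ satisfies all the conditions defining $X_G$ at $\ell=p$ but lies in neither listed orbit, since no automorphism can send $\alpha$ to $\alpha+\beta$ or to $\beta$ (the orders increase); the pair $(\beta,\alpha+\beta)$ gives the same failure in case (3). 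A free-action count shows the number of orbits on generating pairs of $\ZZ/\ell^a\times\ZZ/\ell^b$ is $\ell^{\,b-a-1}(\ell+1)\ge 3$, never $2$, and this is confirmed by class field theory: $\QQ_3$ has four extensions with group $C_3\times C_9$ (all four order-$3$ subgroups of $\QQ_3^\times/(\QQ_3^\times)^9\cong C_9\times C_9$ have quotient $C_3\times C_9$) and $\QQ_5$ has three with group $C_2\times C_4$, while the product formula with $c_\ell=2$ gives $2$ in both cases. So your proposed stabilizer verification, done honestly, refutes rather than confirms the listed $c_\ell$ and $C_\ell$ in these two cases; the same defect is present in the paper's own proof, which invokes transitivity of $\Aut$ on generating sets, a statement that fails for ordered pairs when $a\ne b$. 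Any correct treatment of these cases must keep track of the extra invariant $d\bmod\ell^{b-a}$ (and correspondingly enlarge $C_\ell$), rather than collapse it.
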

\begin{proof}
The role of $x_1$ at $p$ is almost the same as the role of $\tau$ away from $p$, except that the order of $\tau$ must divide $p-1$.  For $\ell \ne p$, the $\ell$-component of $x_1$ must be $0$; the $p$-component of $\tau$ must be $0$.
Therefore, if any $m_\ell$ is at least $3$, it is impossible for $\sigma, \tau$ and $x_1$ to generate $G$.

When $m_\ell = 2$, generating sets for $\ZZ / \ell^a \ZZ \times \ZZ / \ell^b \ZZ$ are permuted transitively by $\Aut(G)$ \cite{hillar-rhea:07a}*{Thm. 3.6}, and if $a = b$ then the two generators can be interchanged by an automorphism.
When $\ell^b$ divides $p-1$ then $\tau$ can be taken as either generator, whereas if $\ell^a$ divides $p-1$ but $\ell^b$ does not then $\tau$ can only be the generator of order $\ell^a$.
If $\ell \ne p$ and $\ell^a$ does not divide $p-1$ then $\sigma$ and $\tau$ cannot generate $G$.

When $m_\ell = 1$ then either $\sigma$ or $\tau$ (or both) must be a generator.  The descriptions of $C_\ell$ then follow from the fact that $\Aut(\ZZ/N\ZZ) \cong (\ZZ/N\ZZ)^\times$.
\end{proof}

\begin{remark}
It is also possible to count abelian extensions using local class field theory, but the orbits on $X_G$ are used in the lifting algorithm of Section \ref{ssec:lifting}.
\end{remark}

\subsection{Tame groups}

If $G$ has order relatively prime to $p$, or more generally if $V$ is trivial, then we must have $x_0 = x_1 = 1$.  We search for elements of $X_G$ by enumerating the normal subgroups that can contain $\tau$,
then finding pairs $(\sigma, \tau)$ that satisfy the tame relation and generate $G$.  We summarize the steps in Algorithm \ref{alg:tamecount}.

\begin{algorithm}
\SetKwData{N}{N}\SetKwData{D}{D}\SetKwData{G}{G}\SetKwData{s}{s}\SetKwData{t}{t}\SetKwData{pairs}{pairs}
\SetKwFunction{DerivedSubgroup}{DerivedSubgroup}
\SetKwFunction{IsCyclic}{IsCyclic}
\SetKwFunction{NormalSubgroupsAbove}{NormalSubgroupsAbove}
\SetKwInOut{Input}{Input}\SetKwInOut{Output}{Output}
\Input{A group \G with trivial $p$-core}
\Output{A list of pairs $(\sigma, \tau)$ representing the $\Aut(G)$-orbits in $X_G$.}
\BlankLine
\D = \DerivedSubgroup{\G}\;
\pairs = []\;
\If{\IsCyclic{\D}}{
\For{\N in \NormalSubgroupsAbove{\D}}{
\If{\IsCyclic{\N} and \IsCyclic{\G/\N}}{
\For{\s in \G that induce $p$th powering on \N}{
\For{\t in \N that generate \G along with \s}{
\If{$(\s, \t)$ not marked}{
Append $(\s, \t)$ to \pairs\;
Mark images of $(\s, \t)$ under $\Aut(G)$\;
}}}}}}
\Return{\pairs}\;
\caption{Enumerating extensions: tame case} \label{alg:tamecount}
\end{algorithm}

\subsection{Lifting homomorphisms} \label{ssec:lifting}

For potentially $p$-realizable groups $G$ that are neither tame nor abelian, we choose a minimal normal subgroup $N \triangleleft G$ (such an $N$ always exists since $G$ is solvable) and set $Q = G/N$.
Inductively, we may assume that we have computed a list $Y_Q$ of representatives for the orbits of $\Aut(Q)$ on $X_Q$.  In particular, if $Q$ is abelian or tame then we may use Section \ref{ssec:ab} or
Algorithm \ref{alg:tamecount}; otherwise we will recursively use the algorithm described in this section.

The idea is to just test all lifts of quadruples $(\sigma, \tau, x_0, x_1) \in Y_Q$ to see if they are valid elements of $X_G$.  There is a subtlety however: there may be automorphisms of $Q$
which are not induced by automorphisms of $G$.  This problem comes in two parts.  First, if $N$ is not a characteristic subgroup then it may not be stabilized by all of $\Aut(G)$, so not all automorphisms descend.
Second, the map $\Stab_{\Aut(G)}(N) \to \Aut(Q)$ is not necessarily surjective, so elements of $X_Q$ that are equivalent under $\Aut(Q)$ may lift to elements that are inequivalent under $\Aut(G)$.

We solve the problem by computing a list of coset representatives for the image of $\Stab_{\Aut(G)}(N) \to \Aut(Q)$.  Then, instead of just lifting elements of $Y_Q$, we lift all translates under these automorphisms.
We summarize this process in Algorithm \ref{alg:lifting}.

\begin{algorithm} \label{alg:liftcount}
\SetKwData{cokreps}{cokreps}\SetKwData{Xreps}{Xreps}
\SetKwInOut{Input}{Input}\SetKwInOut{Output}{Output}
\Input{A potentially $p$-realizable group $G$ and lists of representatives $Y_Q$ for quotients $Q$ of $G$}
\Output{A list $Y_G$ of quadruples $(\sigma, \tau, x_0, x_1)$ representing the $\Aut(G)$-orbits in $X_G$.}
Choose a minimal normal subgroup $N \triangleleft G$\;
Set $Q = G / N$\;
Compute the stabilizer $A$ of $N$ in $\Aut(G)$\;
Compute a list \cokreps of representatives for the cosets of the image of $A$ in $\Aut(Q)$\;
\Xreps = []\;
\ForEach{$(\sigma, \tau, x_0, x_1) \in Y_Q$}{
\ForEach{$\alpha \in$ \cokreps}{
\ForEach{lift $x_1$ of $\alpha(x_0)$ to $G$ that lies in $V$}{
\ForEach{lift $x_0$ of $\alpha(x_1)$ to $G$ that lies in $V$}{
\ForEach{lift $\tau$ of $\alpha(\tau)$ to $G$ with order prime to $p$}{
\ForEach{lift $\sigma$ of $\alpha(\sigma_0)$ with $\tau^\sigma = \tau^p$}{
\If{$(\sigma, \tau, x_0, x_1)$ not marked}{
Mark images of $(\sigma, \tau, x_0, x_1)$ under $\Aut(G)$\;
\If{$\sigma, \tau, x_0, x_1$ generate $G$}{
Append $(\sigma, \tau, x_0, x_1)$ to \Xreps\;
}}}}}}}}
\Return{\Xreps}\;
\caption{Enumerating extensions: lifting method} \label{alg:lifting}
\end{algorithm}

The runtime of Algorithm \ref{alg:lifting} depends on the structure of $G$.
If $N \triangleleft G$ is the minimal normal subgroup used, $C$ is the list of coset representatives in $\Aut(Q)$,
$Y_Q$ is the list of representatives for the quotient $Q$, and $R$ is the time it takes to compute
the wild relation, then the runtime is bounded by $O(\lvert C \rvert \cdot \lvert Y_Q \rvert \cdot \lvert N \rvert^4 R)$.
The actual runtime may be better for some $N$ since we can short circuit some of the loops
if the lifts of $(x_1, x_0, \tau, \sigma)$ do not satisfy the appropriate conditions.

Running Algorithm \ref{alg:lifting} on groups of order up to $2000$ for $p$ up to $13$
required a few weeks of CPU time.  The largest counts found occurred for cyclic groups
such as $C_{1458}: p=3$ ($2916$) and $C_{1210}: p=11$ ($2376$), or for products of
cyclic groups with small non-abelian groups such as $C_{243} \times S_3: p=3$ ($1944$).
For $p=3$, other nonabelian groups had large counts such as 1458G553:
$(C_{27} \rtimes C_{27}) \rtimes C_2$ ($1323$) suggesting that the dominance of
cyclic groups may not last as the order increases.

Figure \ref{fig:YG_trend} shows these counts in aggregate, ignoring the group structure.
Specifically, recall that $Y_G$ is in bijection with the set of Galois extensions of $\Qp$ with Galois group $G$.
Figure \ref{fig:YG_trend} plots the function $f(n)$ that counts the number of potentially realizable $G$
with $\lvert G \rvert \le 2000$ and $\lvert Y_G \rvert \ge n$.  The difference between the first and second
bars in each chart gives the number of groups that are potentially $p$-realizable but not actually $p$-realizable.
We have truncated the charts at $25$ since they have long tails; the previous paragraph gives examples
of $G$ with large $\lvert Y_G \rvert$.

\begin{figure}
\begin{tikzpicture}
\begin{axis}[
scaled ticks=false,
bar width=2pt,
ymin=0,
ymax=17000,
height=2in,
width=2.1in,
axis x line*=bottom,
axis y line*=left,
xlabel={$n$},
]
\addplot [
ybar,
draw=black,
fill=black,
] coordinates {
(0,16411) (1,13073) (2,10104) (3,8974) (4,8373) (5,6827) (6,6247) (7,5894) (8,5842) (9,5171) (10,4896) (11,4867) (12,4837) (13,3963) (14,3944) (15,3935) (16,3926) (17,3508) (18,3457) (19,3372) (20,3372) (21,3040) (22,3015) (23,3006) (24,3006)
};
\end{axis}
\node at (2,3) {$p=3$};
\end{tikzpicture}%
\begin{tikzpicture}
\begin{axis}[
scaled ticks=false,
bar width=2pt,
ymin=0,
ymax=17000,
height=2in,
width=2.1in,
hide y axis=true,
axis x line*=bottom,
xlabel={$n$},
]
\addplot [
ybar,
draw=black,
fill=black,
] coordinates {
(0,7136) (1,6747) (2,4653) (3,3851) (4,3169) (5,2305) (6,2287) (7,1570) (8,1570) (9,1380) (10,1348) (11,1346) (12,1328) (13,1171) (14,1020) (15,1020) (16,1014) (17,982) (18,880) (19,730) (20,730) (21,722) (22,700) (23,686) (24,684)
};
\end{axis}
\node at (2,3) {$p=5$};
\end{tikzpicture}%
\begin{tikzpicture}
\begin{axis}[
scaled ticks=false,
bar width=2pt,
ymin=0,
ymax=17000,
height=2in,
width=2.1in,
hide y axis=true,
axis x line*=bottom,
xlabel={$n$},
]
\addplot [
ybar,
draw=black,
fill=black,
] coordinates {
(0,6178) (1,6012) (2,3956) (3,3239) (4,2413) (5,1843) (6,1843) (7,1658) (8,1657) (9,1065) (10,963) (11,963) (12,963) (13,792) (14,792) (15,792) (16,782) (17,624) (18,624) (19,577) (20,566) (21,566) (22,565) (23,563) (24,510)
};
\end{axis}
\node at (2,3) {$p=7$};
\end{tikzpicture}%
\caption{\label{fig:YG_trend} Number of pot. $p$-realizable $G$ with $\lvert G \rvert \le 2000$ and $\lvert Y_G \rvert \ge n$}
\end{figure}

We do not have theoretical results on the possible sizes of $N$ and $C$, but experimental results are summarized in Tables \ref{tbl:Nsize} and \ref{tbl:coksize}.
The first shows the number of $G$ that have a specified minimum size of $N$, and the second shows the number of pairs $(G, N)$ with a specified size of $C$, which we refer to as the \emph{automorphism index}.

\vspace{0.2in}

\begin{minipage}{\linewidth}
\centering
\captionof{table}{Automorphism index for nonabelian, non-tame $G$} \label{tbl:coksize}
\newcolumntype{R}[1]{>{\hsize=#1\hsize\raggedleft\arraybackslash}X}%
\newcolumntype{C}[1]{>{\hsize=#1\hsize\centering\arraybackslash}X}%
\begin{tabularx}{\textwidth}{R{0.5} | C{1.1} C{1.1} C{1.1} C{1.1} C{1.1} |}
\\[-0.28in]
\cline{2-6}
& \multicolumn{5}{c|}{Number of $N \triangleleft G$ with given automorphism index} \\
\cline{2-6}
Index & $p=3$ & $p=5$ & $p=7$ & $p=11$ & $p=13$ \\
\hline
1 & 8594 & 2393 & 1210 & 561 & 663 \\
2 & 1798 & 594 & 421 & 111 & 117 \\
3 & 468 & 24 & 73 & 25 & 19 \\
4 & 396 & 157 & 59 & 107 & 17 \\
5 & 0 & 7 & 0 & 4 & 0 \\
6 & 333 & 10 & 58 & 0 & 6 \\
8 & 217 & 42 & 47 & 17 & 13 \\
9 & 91 & 0 & 4 & 0 & 0 \\
10 & 2 & 0 & 0 & 0 & 0 \\
12 & 153 & 7 & 4 & 7 & 1 \\
13 & 21 & 0 & 0 & 0 & 0 \\
16 & 37 & 0 & 8 & 0 & 1 \\
18 & 61 & 0 & 2 & 0 & 0 \\
20 & 0 & 4 & 0 & 1 & 0 \\
24 & 99 & 30 & 4 & 12 & 1 \\
$>24$ & 428 & 12 & 7 & 2 & 0 \\
\cline{2-6}
\end{tabularx}
\end{minipage}

\vspace{0.2in}

\begin{minipage}{\linewidth}
\centering
\captionof{table}{Smallest $N \triangleleft G$ for nonabelian, non-tame $G$} \label{tbl:Nsize}
\newcolumntype{R}[1]{>{\hsize=#1\hsize\raggedleft\arraybackslash}X}%
\newcolumntype{C}[1]{>{\hsize=#1\hsize\centering\arraybackslash}X}%
\begin{tabularx}{\textwidth}{R{0.5} | C{1.1} C{1.1} C{1.1} C{1.1} C{1.1} |}
\\[-0.28in]
\cline{2-6}
& \multicolumn{5}{c|}{Number of groups whose $N$ has the given size} \\
\cline{2-6}
Size & $p=3$ & $p=5$ & $p=7$ & $p=11$ & $p=13$ \\
\hline
2 & 8765 & 2437 & 1419 & 638 & 588 \\
3 & 3800 & 423 & 228 & 104 & 110 \\
5 & 27 & 392 & 70 & 26 & 45 \\
7 & 10 & 6 & 168 & 11 & 18 \\
9 & 87 & 0 & 0 & 0 & 0 \\
11 & 0 & 3 & 0 & 56 & 7 \\
13 & 0 & 3 & 0 & 0 & 68 \\
$>13$ & 9 & 17 & 12 & 12 & 2 \\
\cline{2-6}
\end{tabularx}
\end{minipage}

Large indices did occur, but rarely.  There were $20$ cases of index larger than 10000 for $p=3$, the largest being $4586868$. For $p=5$, the only index larger than $124$ was $3100$, occurring $3$ times;
for other $p$ no index larger than $120$ occurred.

\section{The inverse Galois problem for $p$-adic fields} \label{sec:invprob}

\subsection{Examples of non-realizable groups}
Recall that $G$ is \emph{$p$-realizable} if there exists an extension $K/\Qp$ with $\Gal(K/\Qp) \cong G$.
If $G$ is $p$-realizable, then every quotient of $G$ is as well, leading us to consider the following class
of groups.
\begin{definition}
A group $G$ is \emph{minimally unrealizable} if $G$ is not $p$-realizable but it is potentially $p$-realizable and every proper quotient of $G$ is $p$-realizable.
\end{definition}

In Table \ref{tbl:abpcore} we list the minimally unrealizable $G$ that have abelian $p$-core.  The label is from the GAP SmallGroups library, which makes precise the description of the group; we write $\Fp^n$ for $C_p^n$ to emphasize the vector space structure.  The column $V$ describes the decomposition of $V$ into indecomposable submodules: $n^k$ refers to a submodule of dimension $n$ occurring with multiplicity $k$.
The columns SS, TD, and XC will be described in Section \ref{ssec:realizable}.

\vspace{0.3in}

%\begin{minipage}{\linewidth}
{\centering
\captionof{table}{Minimally unrealizable groups with abelian $p$-core} \label{tbl:abpcore}
\vspace{-0.2in}
\begin{longtable}{cr@{G}llcccc}
$p$ & \multicolumn{2}{c}{Label} & Description & $V$ & SS & TD & XC \\ \hline \endhead
$3$ & 27&5 & $\FF_3^3$ & $1^3$ & N & Y & Y\\
$3$ & 36&7 & $\FF_3^2 \rtimes C_4$ & $1^2$ & Y & Y & Y \\
$3$ & 54&14 & $\FF_3^3 \rtimes C_2$ & $1^3$ & Y & N & N \\
$3$ & 72&33 & $\FF_3^2 \rtimes D_8$ & $1^2$ & Y & Y & Y \\
$3$ & 162&16 & $C_9^2 \rtimes C_2$ & $1^2$ & Y & N & N \\
$3$ & 324&164 & $\FF_3^4 \rtimes C_4$ & $2^2$ & Y & N & Y \\
$3$ & 324&169 & $\FF_3^4 \rtimes (C_2 \times C_2)$ & $1^2 \oplus 1^2$ & Y & N & N \\
$3$ & 378&51 & $\FF_3^2 \rtimes (C_7 \rtimes C_6)$  & $1^2$ & Y & Y & Y \\
$3$ & 648&711 & $\FF_3^4 \rtimes C_8$ & $2^2$ & Y & N & Y \\
$5$ & 50&4 & $\FF_5^2 \rtimes C_2$ & $1^2$ & Y & Y & Y \\
$5$ & 125&5 & $\FF_5^3$ & $1^3$ & N & Y & Y \\
$5$ & 200&20 & $\FF_5^2 \rtimes C_8$ & $1^2$ & Y & Y & Y \\
$5$ & 300&34 & $\FF_5^2 \rtimes (C_3 \rtimes C_4)$ & $1^2$ & Y & Y & Y \\
$5$ & 400&149 & $\FF_5^2 \rtimes (C_8 \times C_2)$ & $1^2$ & Y & Y & Y \\
$5$ & 500&48 & $\FF_5^3 \rtimes C_4$ & $1^3$ & Y & N & Y \\
$5$ & 1300&29 & $\FF_5^2 \rtimes (C_{13} \rtimes C_4)$ & $1^2$ & Y & Y & Y \\
$5$ & 1300&30 & $\FF_5^2 \rtimes (C_{13} \rtimes C_4)$ & $1^2$ & Y & Y & Y \\
$5$ & 1875&21 & $\FF_5^4 \rtimes C_3$ & $2^2$ & Y & Y & Y \\
$7$ & 98&4 & $\FF_7^2 \rtimes C_2$ & $1^2$ & Y & Y & Y \\
$7$ & 147&4 & $\FF_7^2 \rtimes C_3$ & $1^2$ & Y & Y & Y \\
$7$ & 343&5 & $\FF_7^3$ & $1^3$ & N & Y & Y \\
$7$ & 588&22 & $\FF_7^2 \rtimes C_{12}$ & $1^2$ & Y & Y & Y \\
$7$ & 882&23 & $\FF_7^2 \rtimes C_{18}$ & $1^2$ & Y & Y & Y \\
$7$ & 1176&130 & $\FF_7^2 \rtimes (C_3 \times D_8)$ & $1^2$ & Y & Y & Y \\
$11$ & 242&4 & $\FF_{11}^2 \rtimes C_2$ & $1^2$ & Y & Y & Y \\
$11$ & 605&4 & $\FF_{11}^2 \rtimes C_5$ & $1^2$ & Y & Y & Y \\
$11$ & 1331&5 & $\FF_{11}^3$ & $1^3$ & N & Y & Y \\
$13$ & 338&4 & $\FF_{13}^2 \rtimes C_2$ & $1^2$ & Y & Y & Y \\
$13$ & 507&4 & $\FF_{13}^2 \rtimes C_3$ & $1^2$ & Y & Y & Y \\
$13$ & 676&10 & $\FF_{13}^2 \rtimes C_4$ & $1^2$ & Y & Y & Y \\
$13$ & 1014&9 & $\FF_{13}^2 \rtimes C_6$ & $1^2$ & Y & Y & Y \\
\end{longtable}}
%\end{minipage}

% Needed since longtable broke across a page.
\addtocounter{table}{-1}

\vspace{0.15in}
 
\begin{minipage}{\linewidth}
\centering
\captionof{table}{Minimally unrealizable groups with nonabelian $p$-core} \label{tbl:nabpcore}
\begin{tabular}{lllll}
$p$ & Label & Description & $G/W$ & $V/W$ \\
\hline
$3$ & 486G146 & $(\FF_3^4 \rtimes C_3) \rtimes C_2$ & 54G13 & $1^2 \oplus 1$ \\
$3$ & 648G218 & $(C_{27} \rtimes C_3) \times D_8$ & 72G37 & $1^2$ \\
$3$ & 648G219 & $(\FF_3^3 \rtimes C_3) \times D_8$ & 72G37 & $1^2$ \\
$3$ & 648G220 & $((C_9 \times C_3) \rtimes C_3) \times D_8$ & 72G37 & $1^2$ \\
$3$ & 648G221 & $((C_9 \times C_3) \rtimes C_3) \times D_8$ & 72G37 & $1^2$ \\
$3$ & 972G816 & $(\FF_3^2 \times (\FF_3^2 \rtimes C_3)) \rtimes (C_2^2)$ & 324G170 & $1^2 \oplus 1 \oplus 1$ \\
$3$ & 1458G613 & $((C_{81} \times C_3) \rtimes C_3) \rtimes C_2$ & 18G4 & $1^2$ \\
$3$ & 1458G640 & $(C_9^2 \rtimes C_9) \rtimes C_2$ & 18G4 & $1^2$ \\
\end{tabular}
\end{minipage}

\subsection{Realizability criteria} \label{ssec:realizable}

We may explain many of the groups in Table \ref{tbl:abpcore} by considering $V/W$
as a representation of $T = G/V$ on an $\Fp$-vector space.
Note that $\lvert T \rvert$ may be divisible by $p$: this will occur precisely when
there is more than one $p$-Sylow subgroup in $G$.
In this case $V/W$ may not have a decomposition as a direct sum of irreducible subrepresentations,
but it still has a decomposition as a direct sum of indecomposable subrepresentations.
The multiplicity of an indecomposable factor is the number of times it appears in such a representation.

Recall from Definition \ref{def:TGXG} that $T_G$ is the set of pairs $(\sigma, \tau) \in G^2$ generating $G/V$
and satisfying the tame relation.
In order to show that a potentially $p$-realizable group $G$ is not $p$-realizable, we will show that
any possible $(\sigma, \tau, x_0, x_1) \in X_G$ that satisfy the tame and wild relations cannot generate $G$.
We will say that $G$ is \emph{strongly split} (SS) if, for every $(\sigma, \tau) \in T_G$,
the order of $\sigma$ in $G$ equals the order of its image in $G/V$.  Note that Conjecture \ref{conj:semidirect}
would imply that there is some $\sigma$ with the same order in $G$ as in $G/V$,
but some lifts of $\sigma$ from $G/V$ to $G$ may have larger order.

We will say that $G$ is \emph{tame-decoupled} (TD) if $\tau$ acts trivially on $V/W$ for every $(\sigma, \tau) \in T_G$.
Finally, we will say that $G$ is \emph{$x_0$-constrained} (XC) if the implication
\[
x_0^\sigma \langle x_0, \tau \rangle^{-1} \in W \Rightarrow x_0 \in W
\]
holds for all $(\sigma, \tau) \in T_G$.  The last three columns of Table \ref{tbl:abpcore} record whether
$G$ is strongly split, tamely-decoupled and $x_0$-constrained, respectively.

\begin{proposition}
If $G$ is tame-decoupled then it is $x_0$-constrained.
\end{proposition}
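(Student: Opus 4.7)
The plan is to reduce the XC implication to a linear question in the $\Fp$-vector space $V/W$ and to resolve it using the bracket formula together with the tame relation $\tau^\sigma = \tau^p$. Fix $(\sigma,\tau) \in T_G$ and $x_0 \in V$. Since $W$ is characteristic in $V$ and hence normal in $G$, conjugation by $\sigma$ descends to an $\Fp$-linear automorphism $\sigma_*$ of $V/W$. Writing $\overline{(\cdot)}$ for images in $V/W$, the hypothesis $x_0^\sigma \langle x_0,\tau\rangle^{-1} \in W$ becomes the equation $\sigma_* \bar x_0 = \overline{\langle x_0, \tau\rangle}$, while the conclusion $x_0 \in W$ becomes $\bar x_0 = 0$.

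The heart of the argument is a computation of $\overline{\langle x_0,\tau\rangle}$ under TD. Set $P = x_0^{h^{p-1}}\tau x_0^{h^{p-2}}\tau \cdots x_0^h\tau$, so that $\langle x_0,\tau\rangle = P^a$ for the integer $a$ realizing the $\pi/(p-1)$ projection. Working modulo $W$ inside $G/W$, the TD hypothesis tells me that $\bar\tau$ commutes with every element of $V/W$, so I can collect all the $\bar\tau$ factors on the right:
\[
\bar P = \bar x_0^{h^{p-1}+h^{p-2}+\cdots+h}\cdot \bar\tau^{p-1}.
\]
Since $h$ reduces to a primitive $(p-1)$st root of unity in $\Fp^\times$, the geometric sum $\sum_{k=1}^{p-1} h^k$ vanishes in $\Zp$, so $\bar P = \bar\tau^{p-1}$. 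Raising to the $a$-th power and using that $(p-1)a$ extracts $p$-parts in any group of order dividing $|G|$ gives $\overline{\langle x_0,\tau\rangle} = \bar\tau^{(p-1)a} = \overline{\tau_p}$, where $\tau_p$ is the $p$-part of $\tau$. Note that $\tau_p$ lies in $V$, since the image of $\tau$ in $G/V$ has order prime to $p$.

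It remains to show $\overline{\tau_p} = 0$. Decomposing $\tau^\sigma = \tau^p$ into $p$- and prime-to-$p$ parts forces $\tau_p^\sigma = \tau_p^p$, and projecting to the exponent-$p$ group $V/W$ yields $\sigma_* \overline{\tau_p} = p\cdot \overline{\tau_p} = 0$. Because $\sigma_*$ is an automorphism of $V/W$ and hence injective, $\overline{\tau_p} = 0$. The reduced equation now reads $\sigma_* \bar x_0 = 0$, and injectivity of $\sigma_*$ once more delivers $\bar x_0 = 0$, i.e., $x_0 \in W$.

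The main obstacle is the bracket computation itself: one has to reason in $G/W$ (where $\tau$ lives, but where $V/W$ does not sit centrally) and use the TD hypothesis to reorder factors modulo $W$, then invoke the identity $\sum_{k=1}^{p-1} h^k = 0$ to eliminate all $\bar x_0$-dependence from $\bar P$. Once $\overline{\langle x_0,\tau\rangle}$ is identified with $\overline{\tau_p}$, the remainder of the proof is a short application of the tame relation and the invertibility of the $\sigma$-action on $V/W$.
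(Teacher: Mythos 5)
Your proof is correct and takes essentially the same route as the paper's: reduce modulo $W$, use tame-decoupling to commute $\bar\tau$ past $\bar x_0$ so the bracket collapses via $h+h^2+\cdots+h^{p-1}=0$ to $\bar\tau^{\pi}$, and then conclude $\bar x_0=0$ from injectivity of conjugation. The only (cosmetic) difference is at the step $\tau^\pi=1$: the paper asserts it directly (the tame relation $\tau^\sigma=\tau^p$ already forces $\tau$ to have order prime to $p$, since conjugation preserves order while $p$-th powering shrinks a nontrivial $p$-part), whereas you prove the weaker but sufficient statement $\overline{\tau_p}=0$ in $V/W$ using the tame relation and injectivity of $\sigma_*$.
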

\begin{proof}
Each condition holds for $G$ if and only if it holds for $G/W$, so we may assume that $V$
is an elementary abelian $p$-group and $W=1$.
Since every $\tau$ acts trivially on $V$ by conjugation and $h$ is a $(p-1)$st root of unity,
\[
\langle x_0, \tau \rangle = (x_0^{1 + h + \dots + h^{p-2}} \tau^{p-1})^{\frac{\pi}{p-1}} = \tau^\pi = 1.
\]
So if $x_0^\sigma \langle x_0, \tau \rangle^{-1} = 1$ then $x_0^\sigma = 1$ and thus $x_0 = 1$.
\end{proof}
Let $n_{G, \sss}$ be $0$ if $G$ is strongly split and $1$ otherwise; let $n_{G, \xc}$ be $0$ if $G$ is $x_0$-constrained
and $1$ otherwise.

\begin{theorem} \label{thm:multiplicity}
Suppose $G$ is potentially $p$-realizable.  Let $n$ be the largest multiplicity of an indecomposable factor of $V/W$
as a representation of $T$.  If $n > 1 + n_{G, \sss} + n_{G, \xc}$, then $G$ is not $p$-realizable.
\end{theorem}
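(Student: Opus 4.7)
The plan is to work modulo $W$ and show that any $(\sigma,\tau,x_0,x_1)\in X_G$ gives at most $1 + n_{G,\sss} + n_{G,\xc}$ generators of $\bar{V}:=V/W$ as an $\Fp[T]$-module; a standard Nakayama argument (the simple top of any indecomposable $N$ appears in $\bar{V}/(\mathrm{rad}\,\Fp[T])\bar{V}$ with multiplicity at least the multiplicity of $N$ in $\bar{V}$) then bounds the multiplicity of every indecomposable summand by this minimum, giving the contrapositive of the theorem.

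First I would simplify the wild relation modulo $W$. Since $y_1$ is a product of conjugates of $x_1$ it lies in $V$, so both $[x_1,y_1]\in V'$ and $x_1^p\in V^p$ lie in $W$; the wild relation therefore reduces to $x_0^\sigma = \langle x_0,\tau\rangle$ in $\bar{V}$, and when $G$ is $x_0$-constrained this gives $\bar{x}_0 = 0$. The relation $\tau^\sigma=\tau^p$ also forces $\tau$ to have order prime to $p$ (otherwise $\tau$ and $\tau^p$ would have distinct orders), so $\tau$ has the same order in $G$ as $\bt$ has in $T$.

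Let $H$ be the subgroup of $\bar{G}:=G/W$ generated by $\sigma,\tau,x_0,x_1$; since these generate $G$ we have $H=\bar{G}$. Because $\bar{V}$ is abelian, the normal closure of $\{x_0,x_1\}$ in $H$ is the $T$-submodule $T\bar{x}_0+T\bar{x}_1$, so $H=\langle\sigma,\tau\rangle\cdot(T\bar{x}_0+T\bar{x}_1)$ and $\bar{V}=T\bar{x}_0+T\bar{x}_1+K$ where $K:=\langle\sigma,\tau\rangle\cap\bar{V}$. Writing a metacyclic presentation $T=\langle\bs,\bt\mid\bt^f,\ \bt^{\bs}=\bt^p,\ \bs^{e'}=\bt^k\rangle$ (with $f$ prime to $p$) and setting $w:=\sigma^{e'}\tau^{-k}\in\bar{V}$, the quotient $\langle\sigma,\tau\rangle/\langle\!\langle w\rangle\!\rangle$ satisfies the same relations as $T$ and has the same order, hence is isomorphic to $T$; therefore $K=T\cdot w$.

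The crucial step is to show $w\in\bar{V}^T$ and that $w=0$ whenever $G$ is strongly split. The containment $\bs^{e'}\in\langle\bt\rangle$ forces $p^{e'}\equiv 1\pmod{f}$, and iterating the Iwasawa relation gives $\tau\sigma^{e'}\tau^{-1}=\sigma^{e'}\tau^{p^{-e'}-1}=\sigma^{e'}$; a parallel computation using $kp\equiv k\pmod{f}$ (which comes from conjugating $\bt^k=\bs^{e'}$ by $\bs$) yields $\sigma w\sigma^{-1}=w$, so $w\in\bar{V}^T$. If $G$ is strongly split then $\sigma^e=1$ for $e=e'f/\gcd(k,f)$, and expanding $(\tau^k w)^{e/e'}=1$ in $\bar{V}$ collapses via $T$-invariance of $w$ to $(e/e')\cdot w=0$; since $e/e'=f/\gcd(k,f)$ divides $f$ and is therefore prime to $p$, we conclude $w=0$. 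Assembling, $\bar{V}$ is generated by $\bar{x}_1$, together with $\bar{x}_0$ when $G$ is not $x_0$-constrained and $w$ when $G$ is not strongly split, for a total of at most $1+n_{G,\sss}+n_{G,\xc}$ generators. The main obstacle is the $T$-invariance of $w$: this is what allows the strongly-split hypothesis to actually kill $w$, and it relies essentially on both the Iwasawa relation and the coprimality of $f$ with $p$.
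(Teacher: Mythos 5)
Your proposal is correct and takes essentially the same route as the paper: reduce modulo $W$, use the $x_0$-constrained condition to discard $\bar{x}_0$ and the strongly split condition to kill the tame defect (your $w$, the paper's powers of $\sigma$ lying in $V$), so that $V/W$ is generated as an $\Fp[T]$-module by at most $1+n_{G,\sss}+n_{G,\xc}$ elements, contradicting the multiplicity hypothesis. Your explicit metacyclic bookkeeping for $w=\sigma^{e'}\tau^{-k}$ and its $T$-invariance is just a more detailed version of the paper's rewriting of words $\sigma^c\tau^d x$ that lie in $V$, and your Nakayama phrasing of the last step matches the paper's projection onto a homogeneous component.
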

\begin{proof}
We first reduce to the case where $W=1$.
This is easily done, since the definitions of $n$, $n_{G, \sss}$ and $n_{G, \xc}$ are invariant
under quotienting by $W$, and if we can show that $G/W$ is not $p$-realizable then
$G$ will be unrealizable as well.  We may therefore replace $V$ by $V/W$ and assume
that $V$ is an elementary abelian $p$-group.  

For sake of contradiction, suppose that $G$ is $p$-realizable, with $(\sigma, \tau, x_0, x_1) \in X_G$.
Suppose that we have an arbitrary word in these generators, and assume that the word is an element of $V$.
Using the conjugation action of $T$ on $V$ and the tame relation,
we may rewrite it as $\sigma^c \tau^d x$, where $x$ is a product of conjugates of $x_0$ and $x_1$ under the
action of $T$.  Thus $\sigma^c \tau^d \in V$, so we may use the fact that $\tau$ has order prime to $p$ to
rewrite $\sigma^c \tau^d$ as $\sigma^{c'} \in V$.  If $G$ is strongly split then we must have
$\sigma^{c'} = 1$; otherwise it could be some nonzero element of $V$.

Since $V$ is an elementary abelian $p$-group, the wild relation \eqref{eq:wild_rel} simplifies to
\begin{equation} \label{eq:elab}
x_0^\sigma \langle x_0, \tau \rangle^{-1} = 1.
\end{equation}
If $G$ is $x_0$-constrained, we must have $x_0 = 1$; otherwise $x_0$ can be nontrivial.

Since $x_1$ is unconstrained, we can write any word in terms of a fixed set of $1 + n_{G, \sss} + n_{G, \xc}$ elements of $V$,
where we are allowed to act on these elements by $T$.  Let $A$ be a homogeneous component of
$V$ with multiplicity $n$, and consider the projections of our $1 + n_{G, \sss} + n_{G, \xc}$ elements onto $A$.
Their $\Fp[T]$-span is a proper subspace of $A$ since $A$ has multiplicity $n > 1 + n_{G, \sss} + n_{G, \xc}$,
contradicting the assumption that $(\sigma, \tau, x_0, x_1)$ generate $G$.\end{proof}

We can get a partial converse, but we now need to assume that $W=1$.

\begin{theorem} \label{thm:converse}
Suppose that $G$ is potentially $p$-realizable with $W = 1$, and that $V$ decomposes as a
multiplicity-free direct sum of irreducible $T$-submodules.  Then $G$ is $p$-realizable.
\end{theorem}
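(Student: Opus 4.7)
Our plan is to produce an explicit element $(\sigma,\tau,x_0,x_1)\in X_G$; by the discussion following Definition~\ref{def:TGXG}, such an element corresponds to a surjection $\GQp\twoheadrightarrow G$ and hence shows that $G$ is $p$-realizable. Because $G$ is potentially $p$-realizable, Proposition~\ref{prop:Vsuffices} provides a tame structure with $G_1=V$, so we may choose $\sigma\in G$ and $\tau\in G_0$ whose images generate $T=G/V$ and which satisfy $\tau^\sigma=\tau^p$ in $G$; in particular $(\sigma,\tau)\in T_G$.

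We then take $x_0=1$. Because $W=1$, the $p$-core $V$ is elementary abelian, and, exactly as in the proof of Theorem~\ref{thm:multiplicity}, the wild relation \eqref{eq:wild_rel} simplifies to $x_0^\sigma=\langle x_0,\tau\rangle$ (the terms $x_1^p$ and $[x_1,y_1]$ both vanish; for the latter one checks that $y_1$ lies in $V$ because every factor in its definition reduces to the $p$-part of an element of $V\langle\tau\rangle$ or $V\langle\sigma_2\tau_2^{(p-1)/2}\rangle$, whose image in $T$ has $2$-power order). With $x_0=1$ the left side is trivial, while unwinding the definition shows $\langle 1,\tau\rangle$ is $\tau^{p-1}$ raised to $\pi/(p-1)$, which vanishes because $\tau$ has order prime to $p$. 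Hence the wild relation is satisfied for \emph{any} choice of $x_1\in V$.

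The heart of the argument is the choice of $x_1$. Writing $V=V_1\oplus\cdots\oplus V_k$ with the $V_i$ pairwise non-isomorphic irreducible $T$-modules, we pick $x_1\in V$ with $\pi_i(x_1)\ne 0$ for every $i$, for instance $x_1=v_1+\cdots+v_k$ with $0\ne v_i\in V_i$. By Schur's lemma applied to distinct irreducibles, $\mathrm{End}_T(V)=\prod_i\mathrm{End}_T(V_i)$, and consequently every $T$-submodule of $V$ has the form $\bigoplus_{i\in S}V_i$ for some $S\subseteq\{1,\ldots,k\}$. Projecting the $T$-submodule $\langle x_1\rangle_T$ onto each $V_j$ yields a nonzero submodule, which must be all of $V_j$ by irreducibility; therefore $\langle x_1\rangle_T=V$.

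Finally, let $H=\langle\sigma,\tau,x_1\rangle$. The image of $H$ in $G/V=T$ is generated by $\bar\sigma$ and $\bar\tau$, so equals $T$. The conjugation action of $G$ on the abelian group $V$ factors through $T$, so $H$ contains every $T$-conjugate of $x_1$, and these generate the $T$-submodule $\langle x_1\rangle_T=V$. Combining, $H\supseteq V$ and $H/V=T$, so $H=G$; thus $(\sigma,\tau,1,x_1)\in X_G$, realizing $G$. The main subtlety is the submodule analysis: because $|T|$ may be divisible by $p$, Maschke's theorem need not apply to $V$, but the given multiplicity-free decomposition into pairwise non-isomorphic irreducibles is precisely what makes the Schur-lemma structure argument go through.
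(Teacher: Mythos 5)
Your proof is correct and follows essentially the same route as the paper's: take $x_0=1$ so the simplified wild relation \eqref{eq:elab} holds for arbitrary $x_1$, obtain $(\sigma,\tau)$ from Proposition~\ref{prop:Vsuffices}, and choose $x_1\in V$ with nonzero projection onto every irreducible summand so that its $T$-conjugates generate $V$. The only difference is that you spell out the supporting details (the Schur-lemma argument that submodules of a multiplicity-free semisimple module are sums of the given summands, and why $y_1\in V$ makes $[x_1,y_1]$ vanish) which the paper leaves implicit.
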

\begin{proof}
It suffices to construct an element of $X_G$.  Since $V$ is an elementary abelian $p$-group,
we again have the relation \eqref{eq:elab}, which is satisfied for $x_0 = 1$ and arbitrary $x_1$.
Since $G$ is potentially $p$-realizable, by Proposition \ref{prop:Vsuffices} there are
$\sigma, \tau \in G$ satisfying the tame relation and generating $G/V$.  Choose $x_1 \in V$
with nonzero projection onto each irreducible component. The conjugates of $x_1$ under $T$
generate $V$, since if they were contained in a proper subspace that subspace would have zero
projection onto some irreducible component, contradicting the choice of $x_1$.  Now the fact that
$\sigma$ and $\tau$ generate $G/V$ means that $x_1, \sigma$ and $\tau$ generate $G$.
\end{proof}

\begin{remark}
There are two groups in Table \ref{tbl:abpcore} that are not explained by Theorem \ref{thm:multiplicity}.  For 324G169, there are nonzero $x_0$ satisfying \eqref{eq:elab},
but they all lie in a $1$-dimensional indecomposable subrepresentation.  The other subrepresentation can't be spanned by $x_1$ on its own.  For 162G16, the quotient by $W$ is $p$-realizable.
Here $V$ is abelian but has exponent 9 rather than 3, so the wild relation takes the form
\begin{equation}
x_0^\sigma \langle x_0, \tau \rangle^{-1} = x_1^p.
\end{equation}
In order to get a nontrivial $x_1$, we need to find $x_0$ with $x_0^\sigma \langle x_0, \tau \rangle^{-1}$ of order $3$.  Such $x_0$ exist, but they all have the property that $x_0^\sigma \langle x_0, \tau \rangle^{-1}$
is a multiple of $x_0$, preventing $x_1$ from spanning the rest of $V$.
\end{remark}

\begin{remark}
Table \ref{tbl:nabpcore} gives the groups of order up to $2000$ with nonabelian $V$ that are minimally unrealizable.  In each case, $G/W$ will be $p$-realizable, so the methods
of this section do not apply.  In order to provide an explanation for why they are not $p$-realizable, one would need to analyze the wild relation more thoroughly.
\end{remark}

\bibliographystyle{plain}
\bibliography{../bibliography/Biblio}

\end{document}